\documentclass[13]{amsart}
\usepackage[utf8]{inputenc}
\usepackage[all]{xy}
\usepackage{tikz-cd}
\usepackage{bm}
\usepackage{enumerate}



\newcommand{\Maps}{\operatorname{Maps}}

\newcommand{\cc}{\mathrm{c}}

\newcommand{\Uone}{\operatorname{U}(1)}
\newcommand{\Rmod}{\mathbb{R}/\mathbb{Z}}

\newcommand{\ot}{\otimes}

\newcommand{\Mod}[1]{\ (\operatorname{mod}\ #1)}

\DeclareMathOperator{\cuad}{\operatorname{Quad}}

\DeclareMathOperator{\Hom}{Hom}

\newcommand{\B}{\mathcal{B}}

\numberwithin{equation}{section}

\newtheorem{theorem}{Theorem}[section]

\newtheorem{corollary}[theorem]{Corollary}

\newtheorem{proposition}[theorem]{Proposition}
\theoremstyle{definition}

\newtheorem{remark}[theorem]{Remark}

\newtheorem{definition}[theorem]{Definition}

\newcommand{\Fmatrix}[4]{F {\scriptscriptstyle \left[\begin{matrix} $#4$\\ $#1$,$#2$,$#3$ \end{matrix}\right]}}
\newcommand{\FuSp}[3]{\begin{bmatrix}$#3$ \\ $#1$,$#2$ \end{bmatrix}}

\begin{document}
\title[Solutions of the hexagon equation for abelian anyons]
{Solutions of the hexagon equation for abelian anyons}

\author{C\'esar Galindo}

\address{Departamento de Matem\'aticas\\Universidad de los Andes\\Carrera 1 N. 18A -10, Bogot\'a\\
Colombia}
\email{cn.galindo1116@uniandes.edu.co}

\author{ Nicol\'as Jaramillo Torres}

\address{Departamento de Matem\'aticas\\Universidad de los Andes\\Carrera 1 N. 18A -10, Bogot\'a\\
Colombia}
\email{n.jaramillo1163@uniandes.edu.co}

\thanks{C. G. was partially supported by the FAPA funds from vicerrectoria de investigaciones de la Universidad de los Andes.}

\begin{abstract}
We address the problem of determining the obstruction  to 
existence of solutions of the hexagon equation for abelian fusion rules and the classification of prime  abelian anyons.
\end{abstract}
\keywords{Anyons, pointed fusion categories, modular categories, quadratic forms}
\subjclass[2010]{16T05, 18D10}
\maketitle

\section{Introduction}
 
Anyons are two-dimensional particles which in contrast to boson or fermions satisfy exotic statistics.  The exchange of two identical anyons can in general be described by either abelian or non-abelian statistics. In the abelian case an exchange of two particles gives rise to a complex phase $e^{2\pi i \theta}$. Bosons and fermions correspond only to the phase changes $+1$  and $-1$ respectively. Particles with non-real phase change are considered anyons. In general,  the statistics of anyons is described by unitary operators acting on a finite dimensional degenerate ground-state manifold, \cite{kitaev2003fault}.

There has been increased interest in non-abelian anyons since they possess the ability to store, protect and manipulate quantum information \cite{kitaev2003fault,kitaev2006anyons,freedman2003topological,wang2010topological,pachos2012introduction}. In contrast, abelian anyons only seem good as quantum memory. Moreover, abelian anyons are interesting  for two reasons. First, they have simpler physical realizations than non-abelian anyons; and second, gauging a finite group of topological symmetries of an abelian anyon theory, when it possible, leads
to a new anyon theory that is in general is non-abelian, \cite{cui2015gauging}. 
Moreover, all concrete known examples of non-abelian anyon theories with integer global dimension are constructed from a gauging of an abelian anyon theory.

Mathematically speaking, an abelian anyon theory is a modular pointed category (\cite{drinfeld2010braided,etingof2005fusion}), and the latter comprise are the class of modular categories which are best understood.
Abelian anyons correspond to  triples $(A,\omega,c)$, where $A$ is a finite abelian group and  $(\omega, c)\in Z^3_{ab}(A,\Uone)$ is an abelian 3-cocycle. The set of modular categories up to gauge equivalence  with a fixed abelian group $A$ forms an abelian group denoted by $H^3_{ab}(A,\Uone)$ and called the \textit{third abelian cohomology group} of $A$. The groups $H^3_{ab}(A,B)$ were defined and studied by  Eilenberg and MacLane in \cite{eilenberg1,eilenberg2} for any  pair of abelian groups
\cite{eilenberg1,eilenberg2}. In this work, we address the problem of determining for an ordinary 3-cocycle $\omega \in Z^3(A,B)$ the obstruction to the existence of a map $c:A\times A\to B$ such that $(\omega,c)\in Z^3_{ab}(A,B)$. To that end, we construct a double complex associated to a finite abelian group and a map from the ordinary group cohomology to the total cohomology of the double complex.
We find several exact sequences involving $H^3_{ab}(A,B)$ and provide an explicit method for the construction of all possible abelian 3-cocycles. We finish the note with a reformulation of an old result of Wall \cite{MR0156890} and Durfee \cite{MR0480333} on the classification of indecomposable symmetric forms on finite abelian groups in terms of classification of prime abelian anyons.

The paper is organized as follows. In Section 2 we recall the definitions of group cohomology and abelian group cohomology. Section 3 contains a brief introduction to fusion algebras and the pentagon and hexagon equation. In section 4 we present the main results of the paper. We recall a theorem of  Eilenberg and MacLane  about the isomorphism between $H^3_{ab}(A,B)$ and $\cuad(A,B)$ (the group of all quadratic forms from $A$ to $B$). We also show that $\cuad(A,\Rmod)$ can be computed inductively from a decomposition of $A$ as direct sum of cyclic groups.  In this section we also define the obstruction  for the existence of solutions of the hexagon equation. Section 5 contains the classification of prime abelian anyon theories.

\smallbreak\subsubsection*{Acknowledgements} The authors are grateful to Paul Bressler, Richard Ng, Eric Rowell and Julia Plavnik for  useful discussions. This research was partially supported by the FAPA funds from Vicerrector\'{i}a de Investigaciones de la Universidad de los Andes.

\section{Preliminaries}

In this section we present some basic definitions of group cohomology and abelian group cohomology. A lot of this material can be found in \cite{eilenberg1} and \cite{eilenberg2}.  

We will denote by $\operatorname{U}(1)$ the group of
complex numbers of modulus $1$, which we will often write additively through the identification with $\mathbb{R}/\mathbb{Z}$.

Given an abelian group $A$ we will denote by $S^2(A), \wedge^2 A$ and $A^{\otimes 2}$ the second symmetric power, second exterior power and second tensor power of $A$, respectively. Here, we see $A$ as a $\mathbb{Z}$-module.

Given a group $G$ we will denote by $\widehat{G}$ to the abelian group of all linear character of $G$, that is $$\widehat{G}=\Hom(G,\Uone)=\Hom(G,\Rmod).$$

\subsection{Group cohomology}

We will recall the usual cocycle description of group cohomology associated to the normalized bar resolution of $\mathbb{Z}$, see \cite{eilenberg1} for more details.  Let $G$ be a discrete group and let $A$ be a $\mathbb{Z}[G]$-module. Let $C^0(G,A)=A$, and let  $$C^n(G,A)=\{f:\underbrace{G\times\cdots \times G}_{n-times}\to A| f(x_1,\ldots,x_n)=0, \text{ if } x_i=1_G \text{ for some }i \},$$for $n \geq 1$.

Consider the cochain complex
\begin{equation*}\label{complex}
0 \longrightarrow C^0 (G, A) \stackrel{\delta_0}{\longrightarrow }
C^1 (G, A) \stackrel{\delta_1}{\longrightarrow }C^2 (G, A) \cdots C^{n} (G, A)
\stackrel{\delta_n}{\longrightarrow } C^{n+1} (G, A) \cdots
\end{equation*} where
\begin{align*}
    \delta_n(f)(x_1,x_2,\ldots,x_{n+1})&=x_1\cdot f(x_2,\ldots,x_{n+1})\\
    &+\sum_{i=1}^n (-1)^{i} f(x_1,\ldots,x_{i-1},x_ix_{i+1},x_{i+2},\ldots,x_{n+1})\\
    &+ (-1)^{n+1}f(x_1,\ldots,x_{n}).
\end{align*}
We denote,  $Z^n(G,A):=\ker(\delta_n)$ ($n$-cocycles), $B^n(G,A):= \text{Im}(\delta_{n-1})$ ($n$-coboundaries) and
$$H^n(G,A):=Z^n(G,A)/B^n(G,A) \  \  (n\geq 1),$$  the cohomology of $G$ with coefficients in $A$.

\subsection{Eilenberg-MacLane cohomology theory of abelian groups}
Let $A$ be an abelian group. A space $X$ having only one nontrivial homotopy group $\pi_n(X)= A$
is called the Eilenberg-MacLane space $K(A, n)$. Such space can be constructed as a CW complex or using the  \textit{Dold-Kan correspondence} between chain complexes and simplicial abelian groups. If $A[n]$ is the chain complex which is $A$ in dimension $n$  and trivial elsewhere; the geometric realization of the corresponding simplicial abelian group is a $K(A,n)$ space.

The abelian cohomology theory
of the abelian group $M$ with coefficients in the abelian group $N$ is defined as $$H^{n}_{ab}(M,N):=\{\text{Homotopy classes }\  K(M,2)\to K(N,n+1)\}$$

In \cite{eilenberg1,eilenberg2} Eilenberg and MacLane defined a chain complex associated to any abelian group $M$ to
compute the abelian cohomology groups of the space $K(M, 2)$. 

We use the following notations for $X$, $Y$ any two groups:

\begin{itemize}
\item $X^p|Y^q=\{\bm{x}|\bm{y}=(x_1,\ldots,x_p|y_1,\ldots,y_q), x_i \in X, y_j\in Y\},$ \  $p,q\geq 0$.
\item ${\rm{Shuff}}(p,q)$ the set of  $(p,q)$-shuffles, i.e. an element in the symmetric group ${\mathbb{S}}_{p+q}$ such that $\lambda(i) <\lambda(j)$ whenever $1 \leq i < j \leq p $ or $p+1 \leq i < j \leq p+q$.
\item Any $\pi \in {\rm{Shuff}}(p,q)$ defines a map 
\begin{align}
\pi: X^{p+q}&\to X^{p+q}\\
(x_1,\ldots ,x_{p+q})&\mapsto (x_{\pi(1)}\ldots,x_{\pi(p+q)})
\end{align}
\end{itemize}

Let $M$ and $N$ be abelian groups. Define the abelian group $C^0_{ab}(M,N)=0$ and for $n>0$

$$C^n_{ab}(M,N)=\bigoplus_{p_1,\ldots,p_r\geq 1: r+\sum_{i=1}^rp_i=n+1}\Maps(M^{p_1}|\cdots |M^{p_r},N),$$
where $\Maps(M^{p_1}|\cdots |M^{p_r},N)$ denotes the abelian group of all maps from $M^{p_1}|\cdots |M^{p_r}$ to $N$.

The coboundary maps are defined as $$\partial:C^n_{ab}(M,N)\to C^{n+1}_{ab}(M,N)$$
\begin{align*}
    \partial (f)(\bm{x}^1|\bm{x}^2|\ldots|\bm{x}^{r})= & \sum_{\substack{1\leq i \leq r\\ 0\leq j\leq p_i}} (-1)^{\epsilon_{i-1}+j}f(\bm{x}^1|\ldots|d_j\bm{x}^i|\cdots|\bm{x}^r)\\
    &+ \sum_{\substack{1\leq i \leq r-1\\  \pi\in {\rm{Shuff}}(p_i,p_{i+1})}} (-1)^{\epsilon_{i}+\epsilon(\pi)}f(\bm{x}^1|\ldots|\pi (\bm{x}^i|\bm{x}^{i+1})|\cdots|\bm{x}^r)
\end{align*}
where 
\begin{align*}
d_j:M^{p_i}&\to M^{p_i-1}\\ (x_1,\ldots,x_{p_i})&\mapsto (x_1,\ldots,x_{i-1},x_ix_{i+1},x_{i+2},\ldots,x_{p_i})
\end{align*}are the face operators; $\epsilon_i=p_1+\cdots p_i+i$ and $\epsilon(\pi)$ is the sign of the shuffle $\pi$.

We denote,  $Z^n_{ab}(M,N):=\ker(\partial_n)$ (called abelian $n$-cocycles), $B^n_{ab}(M,N):= \operatorname{Im}(\partial_{n-1})$ (called abelian $n$-coboundaries) and
$$H^n_{ab}(M,N):=Z^n_{ab}(M,N)/B^n_{ab}(M,N) \  \  (n\geq 1),$$ the abelian cohomology of $M$ with coefficients in $N$.

Let us write the first cochains groups and their coboundaries.

\begin{itemize}
\item $C^0_{ab}(M,N)=0$,
\item $C^1_{ab}(M,N)=\Maps(M,N)$,
\item $C^2_{ab}(M,N)=\Maps(M^2,N) $,
\item $C^3_{ab}(M,N)=\Maps(M^3,N)\oplus \Maps(M|M,N)$
\item $C^4_{ab}(M,N)=\Maps(M^4,N)\oplus \Maps(M^2|M,N)\oplus \Maps(M|M^2,N)$.

\end{itemize}
Thus 
\begin{itemize}
\item Since  $C^0_{ab}(M,N)=0$,  $H^1_{ab}(M,N)=Z^1_{ab}(M,N)=\Hom(M,N)$.
\item For $f\in C^2_{ab}(M,N)$, we have  $$\partial(f)(x,y,z)= f(y,z)-f(xy,z)+f(y,z)-f(x,yz), \  \  \partial(f)(x|y)=f(x,y)-f(y,x).$$ Then $H^2_{ab}(M,N)\cong \operatorname{Ext}_{\mathbb{Z}}^1(M,N)$ the group of abelian extensions of $M$ by $N$.
\item Finally, for $(\omega, c)\in C^3_{ab}(M,N)$ we have 
\begin{itemize}
\item[(i)] $\partial(\omega)(x,y,z,t)=\omega(y,z,t)-\omega(x+y,z,t)+\omega(x,y+z,t)-\omega(x,y,z+t)+\omega(x,y,z)$,
\item[(ii)] $\partial(c)(x|y,z)=c(x|z)-c(x|y+z)+c(x|y)+w(x,y,z)-\omega(y,x,z)+\omega(y,z,x)$,
\item[(iii)]$\partial(c)(x,y|z)=c(y|z)-c(x+y|z)+c(x|z)-\omega(x,y,z)+\omega(x,z,y)-\omega(z,x,y).$
\end{itemize}
\end{itemize}

\section{Fusion algebras}
A fusion algebra is based on a finite set $A$ (where elements will be called  anyonic particles or simply particles). The elements in $A$ will be denoted by $a,b,c,\ldots .$

For every particle $a$ there exists a unique  anti-particle, that we denote by $\overline{a}$. There is a unique trivial “vacuum” particle denoted by $1$ (or sometimes 0). 

The fusion algebra has  \textit{fusion rules}

\begin{equation*}
    a\times b= \sum_{c}N_{ab}^cc
\end{equation*}
where $N_{ab}^c\in \mathbb{Z}^{\geq 0}$  that count the number of ways the particles $a$ and $b$ fuse into $c$. The fusion rules obey the following relations

\begin{itemize}
    \item associativity $(a\times b)\times c= a\times (b\times c)$,
    \item commutativity  $a\times b=b\times a$,
    \item the vacuum is the identity for the fusion product, $a\times 1=a,$
    \item the rule $a\mapsto \overline{a}$ defines an involution of the fusion rules, that is, $$\overline{1}=1,\  \ \overline{\overline{a}}=a,\  \ \overline{a}\times \overline{b}= \overline{a\times b},$$ where $$\overline{a\times b}= \sum_c N_{ab}^c \overline{c}.$$
    \item The fusion of $a$ with its antiparticle $\overline{a}$ contains the vacuum with multiplicity one, that is $$N_{a\overline{a}}^1=1.$$

\end{itemize}

A fusion algebra is called \textit{abelian} if $$\sum_c N_{ab}^c=1$$ for every $a$ and $b$. This is if the fusion of two particles $a \times b = c$, is again one of the particles in $A$. If $A$ is an abelian fusion algebra, then the fusion product defines a structure of abelian group on $A$ and conversely every finite abelian group defines a set of abelian fusion rules.

If we have a fusion algebra on the set $A$ with $n$ particles, we can assign to each particle $a$ the matrix $N_a$ whose entries are exactly $N_{ab}^c$ in the position $(b,c)$. This is an $n\times n$ integer matrix that contains all the information about the fusion rules of $a$. It satisfies the equation $$N_a N_b = \sum_c N_{ab}^c N_c.$$

\subsection{The Pentagon equation for abelian anyons}

Throughout this section, we will follow the notation of \cite{tambara1998tensor}, slightly modified for our purposes. For further reading on these topics we direct the reader to \cite{bernevig2015topological,bonderson2007non,kitaev2006anyons}.

Let $A$ be a fusion algebra. Assign to each fusion product a vector space $\FuSp{a}{b}{c}$ of dimension $N_{a,b}^c$.
If $N_{a,b}^c=0$ then $\FuSp{a}{b}{c}=0$. The vector spaces $\FuSp{a}{b}{c}$ are called the \textit{fusion spaces} of $A$. The fusion space takes in account the ways in which the anyons $a$ and $b$ can fuse together to give $c$.


Now, consider the fusion of the particles $a$, $b$ and $c$. The associativity of the fusion rules ensures that $(a \times b) \times c= a \times (b \times c)$, but with the fusion spaces there are two different objects that can do this. The first being
$$\bigoplus_{i\in A}\FuSp{a}{b}{i} \otimes \FuSp{i}{c}{d},$$ and the second being  $$\bigoplus_{i\in A} \FuSp{b}{c}{i} \otimes \FuSp{a}{i}{d}.$$

We would like a family of linear isomorphisms that takes in account the distinct ways of "associating" fusion spaces in this context, thus we have the following definition:

An $F$-matrix for a fusion algebra $A$ is a family of linear isomorphisms 

\begin{align*}
   {\Fmatrix{a}{b}{c}{d}} :\bigoplus_{i\in A}\FuSp{a}{b}{i} \otimes \FuSp{i}{c}{d} &\longrightarrow \bigoplus_{j\in A} \FuSp{b}{c}{j} \otimes \FuSp{a}{j}{d}
\end{align*}

which satisfies the pentagon equation:

\begin{equation*}
    \xymatrix{
    \bigoplus_{i,j} \FuSp{a}{b}{i} \FuSp{i}{c}{j} \FuSp{j}{d}{e} 
    \ar[r]^{\Fmatrix{a}{b}{c}{j}}
    \ar[d]^{\Fmatrix{i}{c}{d}{e}}&
    \bigoplus_{i,j} \FuSp{b}{c}{i} \FuSp{a}{i}{j} \FuSp{j}{d}{e} 
    \ar[r]^{\Fmatrix{a}{i}{d}{e}}&
    \bigoplus_{i,j} \FuSp{b}{c}{i} \FuSp{i}{d}{j} \FuSp{a}{j}{e} 
    \ar[d]^{\Fmatrix{b}{c}{d}{j}}\\
    \bigoplus_{i,j} \FuSp{a}{b}{i} \FuSp{c}{d}{j} \FuSp{i}{j}{e} 
    \ar[r]^{\tau}&
    \bigoplus_{i,j} \FuSp{c}{d}{i} \FuSp{a}{b}{i} \FuSp{i}{j}{e} 
    \ar[r]^{\Fmatrix{a}{b}{j}{e}}&
    \bigoplus_{i,j} \FuSp{c}{d}{i} \FuSp{b}{i}{j} \FuSp{a}{j}{e}
    },
\end{equation*}
or simply
\begin{equation}\label{pentagonal.eq}
  \sum_{i,j\in A}\Fmatrix{b}{c}{d}{j}  \Fmatrix{a}{i}{d}{e}\Fmatrix{a}{b}{c}{j}= \sum_{i,j\in A}\Fmatrix{i}{c}{d}{e} \Fmatrix{a}{b}{j}{e}.
\end{equation}
 In the diagram above, 
 $$\tau:\bigoplus_{i,j} \FuSp{a}{b}{i} \FuSp{c}{d}{j} \longrightarrow \bigoplus_{i,j} \FuSp{c}{d}{i} \FuSp{a}{b}{i} $$
 is the operator that swaps the components of $\FuSp{a}{b}{i}$ and $\FuSp{c}{d}{j}$.
 We also omit the tensor products and identity operators for simplicity.


We want that any transformation through the $F$-matrix starting and ending in the same spaces to be the same. Equation \eqref{pentagonal.eq} ensures this. 


Let us assume that $A$ is an abelian fusion algebra. Then we must have that each fusion space is either one or zero dimensional and an $F$-matrix for  $A$ is determined by a family of scalars $$\left\{\omega(a,b,c):=\Fmatrix{a}{b}{c}{d}\in \mathbb{C}^*\right\}_{a,b,c\in A}$$ such that 
\begin{equation}\label{3-cociclo}
\omega(a_1a_2,a_3,a_4)\omega(a_1,a_2,a_3a_4)=\omega(a_1,a_2,a_3)\omega(a_1,a_2a_3,a_4)\omega(a_2,a_3,a_4),    
\end{equation}
for all $a_1,a_2,a_3,a_4\in A$.

A function $\omega: A\times A\times A\to \Uone$ satisfying equation \eqref{3-cociclo} is just a standard 3-cocycle. Thus, the set of all solutions of the pentagon equation of an abelian fusion algebra is exactly $Z^3(A,\Uone)$.

A gauge transformation between two solution of pentagon equation $\omega, \omega' \in Z^3(A,\Uone)$ is determined by a family of non zero scalars $\{u(a,b)\}_{a,b\in A}$ such that $$\omega'(a,b,c)=\frac{u(ab,c)u(a,b)}{u(a,bc)u(b,c)}\omega(a,b,c),$$ for all $a,b,c\in A$. Thus, the set of gauge equivalence classes of solution of the pentagon equation is the $H^3(A,\Uone)$.

\subsection{The hexagon equation}
In this section we will assume that $A$ is an abelian group and $\omega\in Z^3(A,\Uone)$ is a 3-cocycle.

In the previous section, we extended the associativity of the fusion rules to the associativity of the fusion spaces through a family of linear operators called $F$-matrix. Now, we want to extend the commutativity as well. 

In order to do this, we need a family of unitary operators
$$R_{a,b}^c: \FuSp{a}{b}{c} \to \FuSp{b}{a}{c}$$ that satisfy $$R_{a,1}^a= \operatorname{Id} = R_{1,a}^a$$ and the hexagon equations
\begin{align}
\sum_{i,j,k} R_{a,c}^i \Fmatrix{b}{a}{c}{j} R_{a,b}^k &=  \sum_{i,j,k}\Fmatrix{a}{c}{b}{i} R_{b,c}^j \Fmatrix{a}{b}{c}{k} \\
\sum_{i,j,k} (R_{a,c}^i)^{-1} \Fmatrix{b}{a}{c}{j} (R_{a,b}^k)^{-1} &=  \sum_{i,j,k}\Fmatrix{a}{c}{b}{i} (R_{b,c}^j)^{-1} \Fmatrix{a}{b}{c}{k} .
\end{align} 

We will call such family an \textit{$R$-matrix}, or a \textit{braiding}, for $A$.
As before, these equations imply that any transformation within the $R$ and the $F$-matrices are independent of the path.

In the case where $A$ is an abelian theory with an associated 3-cocycle $\omega$, a braiding is determined by a family of scalars $\{c_{a,b}\}_{a,b\in A}$ that satisfy the equations
\begin{align*}
\frac{\omega(b,a,c)}{\omega(a,b,c) \omega(b,c,a) }&= \frac{\cc(a,bc)}{\cc(a,b) \cc(a,c)}\\
\frac{\omega(a,b,c) \omega(c,a,b) }{\omega(a,c,b)}&= \frac{\cc(ab,c)}{\cc(a,c) \cc(b,c)}.
\end{align*}

Thus, $(\omega,\cc)$ is an \textit{abelian $3$-cocycle}. The solutions of the hexagon up to gauge equivalence is the group $H^3_{ab}(A,\Uone)$.

\section{Computing $H^3_{ab}(M,N)$}

\subsection{Quadratic forms  and $H^3_{ab}(A,B)$}

Let  $A$ and $B$ be abelian groups. A quadratic form from $A$ to $B$ is a function $\gamma: A\to B$ such that

\begin{align}
    \gamma(a)&=\gamma(-a) \label{Gamma 1}\\
    \gamma(a+b+c)-\gamma(b+c)-\gamma(a+c)&-\gamma(a+b)+\gamma(a)+\gamma(b)+\gamma(c)=0,\label{Gamma 2}
\end{align}for any $a,b,c\in A$. A map $\gamma:A\to \B$ such that $\gamma(a)=\gamma(-a)$ satisfies $\eqref{Gamma 2}$ if and only if the map
\begin{align*}
 b_\gamma :A\times A&\to B\\
(a_1,a_2)&\mapsto \gamma(a_1+a_2)- \gamma(a_1)-\gamma(a_2)
\end{align*}is a symmetric bilinear form. It follows  by induction that $\gamma(na)=n^2\gamma(a)$ for any positive integer $n$.

We will denote by $\cuad(A,B)$, the group of all quadratic forms from $A$ to $B$. Eilenberg and MacLane proved in  \cite[Theorem 26.1]{eilenberg2} that for any two abelian groups $A$, $B$, the map
\begin{align}
\operatorname{Tr}:H^3_{ab}(A,B)&\to \cuad(A,B) \label{Traza}\\    (\omega,c)&\mapsto [a\mapsto c(a,a)]\notag
\end{align}is a group isomorphism.

If $A$ is a finite abelian group, the group $\cuad(A,\Rmod)$ can be computed using the following results. 
\begin{proposition}\label{quad func cyclic}
If $n$ is odd, then $\cuad(\mathbb{Z}_n,\Rmod)$ is a cyclic group of order $n$, with generator given by 
\begin{align*}
    q_n: \mathbb{Z}_n&\to \Rmod\\
     m&\mapsto m^2/n.
\end{align*}
If $n$ is even $\cuad(\mathbb{Z}_n,\Rmod)$ is a cyclic group of order $2n$, with generator given by 
\begin{align*}
    q_{2n}: \mathbb{Z}_n &\to \Rmod\\
     m&\mapsto m^2/2n.
\end{align*}
\end{proposition}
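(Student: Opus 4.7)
The plan is to exploit the fact that a quadratic form on a cyclic group is completely determined by its value on a generator, and then determine precisely which values are admissible.

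First, I would fix a generator $1 \in \mathbb{Z}_n$ and observe that for any quadratic form $\gamma \colon \mathbb{Z}_n \to \Rmod$, the identity $\gamma(ma) = m^2 \gamma(a)$ (noted in the excerpt right after the definition) forces $\gamma(m) = m^2 \gamma(1)$ for every integer $m$. Hence the evaluation map $\gamma \mapsto \gamma(1)$ is an injective group homomorphism from $\cuad(\mathbb{Z}_n, \Rmod)$ into $\Rmod$, and the entire problem reduces to identifying its image, i.e.\ describing the set $T_n \subset \Rmod$ of $t \in \Rmod$ such that the assignment $\gamma_t(m) := m^2 t$ descends to a well-defined quadratic form on $\mathbb{Z}_n$.

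Next I would extract the constraints on $t$. Well-definedness of $\gamma_t$ on $\mathbb{Z}_n$ requires $\gamma_t(m+n) = \gamma_t(m)$ for all $m$, which after expanding $(m+n)^2 - m^2 = 2mn + n^2$ is equivalent to the two conditions
\begin{equation*}
2n\,t = 0 \qquad \text{and} \qquad n^2\,t = 0.
\end{equation*}
The symmetry $\gamma_t(m) = \gamma_t(-m)$ follows automatically from $m^2 = (-m)^2$, and the cocycle identity \eqref{Gamma 2} holds because the associated form $b_{\gamma_t}(a,b) = 2ab\,t$ is manifestly bilinear. So $T_n$ is exactly the subgroup of $\Rmod$ annihilated by $\gcd(2n, n^2)$.

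Finally I would do the parity split. When $n$ is odd, $\gcd(2n, n^2) = n$, so $T_n = \tfrac{1}{n}\mathbb{Z}/\mathbb{Z}$ is cyclic of order $n$, with generator $t = 1/n$; this corresponds to the quadratic form $q_n(m) = m^2/n$. When $n$ is even, $\gcd(2n, n^2) = 2n$, so $T_n = \tfrac{1}{2n}\mathbb{Z}/\mathbb{Z}$ is cyclic of order $2n$, with generator $t = 1/(2n)$, yielding $q_{2n}(m) = m^2/(2n)$. In both cases the evaluation map is an isomorphism onto $T_n$, proving the claim.

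The only slightly delicate point is the parity bookkeeping: one must resist the temptation to drop the condition $n^2 t = 0$ after imposing $2n t = 0$. For odd $n$ this extra condition genuinely strengthens $2nt = 0$ (cutting the order of $t$ from $2n$ down to $n$), and omitting it would incorrectly produce a group of order $2n$ in the odd case as well.
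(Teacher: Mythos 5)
Your proposal is correct and follows essentially the same route as the paper: reduce to the value $\gamma(1)$, derive the constraints $n^2\gamma(1)=0$ and $2n\gamma(1)=0$ (the paper gets them from $\gamma(n)=0$ and $\gamma(1)=\gamma(-1)$ rather than from well-definedness of the lift, but these are the same conditions), and split by parity via $\gcd(2n,n^2)=n\gcd(2,n)$. Your explicit sufficiency check that $b_{\gamma_t}(a,b)=2abt$ is bilinear is a welcome detail the paper leaves implicit.
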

\begin{proof}
Let $\gamma:\mathbb{Z}_n\to \Rmod$ be a quadratic form. Since  $\gamma(m)=m^2\gamma(1)$, the quadratic form is completely determined by $\gamma(1)\in \mathbb{Q}/\mathbb{Z}$. Since $q(n)=0$, $n^2q(1)=0$, and since $q(1)=q(-1)$, $2nq(1)=0$.

If $n$ is odd. Then $nq(1)=0$, so $q(1)\in\{1/n,2/n,\ldots, 0\}\subset \mathbb{Q}/\mathbb{Z}$ define all possible quadratic forms.  If $n$ is even, $q(1)\in\{1/2n,2/2n,\ldots, 0\}\subset \mathbb{Q}/\mathbb{Z}$ define the possible quadratic forms.
\end{proof}

\begin{remark}\label{construccion 3-cociclo de un ciclicoc par}
Let $n$ be  an even positive integer. An abelian 3-cocycle $(\omega,c)\in Z^3(\mathbb{Z}/n\mathbb{Z},\Rmod)$ representing the cohomology class of the quadratic form $q_{2n}$ is given by 
\begin{align*} c(a,b)=\frac{ab}{2n}, \  \  \     \  \  \omega(a,b,c)=  \begin{cases} \frac{a}{2}, \qquad &\text{if } b+c\geq n,\\
0. \quad  &\text{ other case.}
\end{cases}
\end{align*}
\end{remark}

\begin{proposition}\label{formas cuadra de una suma}
Let $A$ and $B$ be abelian group, then the map 
\begin{align*}
    T:\Hom(A\otimes B, \Rmod)&\oplus \cuad(A,\Rmod)\oplus \cuad(B,\Rmod) \to \cuad(A\oplus B,\Rmod)\\
    f\oplus \gamma_A\oplus  \gamma_B&\to [(a,b)\mapsto f(a\oplus b)+\gamma_A(a)+\gamma_B(b)],
\end{align*}is a group isomorphisms.
\end{proposition}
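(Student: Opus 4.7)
The plan is to verify the three standard properties of $T$: well-definedness, injectivity, and surjectivity, using the fact that a quadratic form is characterized by its associated symmetric bilinear form $b_\gamma(a_1,a_2)=\gamma(a_1+a_2)-\gamma(a_1)-\gamma(a_2)$.

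First I would check that $T(f\oplus\gamma_A\oplus\gamma_B)$ really is a quadratic form on $A\oplus B$. Condition \eqref{Gamma 1} is immediate from $f$ being bilinear (so $f((-a)\otimes(-b))=f(a\otimes b)$) and from $\gamma_A,\gamma_B$ being quadratic. For \eqref{Gamma 2}, the sum is additive in its three inputs, and each summand ($f$-part and $\gamma$-parts) satisfies the identity separately: the $f$-part because bilinear maps are quadratic (with polarization equal to twice themselves), and the $\gamma_A,\gamma_B$ parts by assumption. That $T$ is a group homomorphism is clear from the definition.

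For injectivity, if $T(f\oplus\gamma_A\oplus\gamma_B)=0$, then evaluating at $(a,0)$ gives $\gamma_A(a)=0$ for all $a\in A$ (since $f(a\otimes 0)=0$), and similarly $\gamma_B=0$. Hence $f(a\otimes b)=0$ for all $a,b$, and since pure tensors generate $A\otimes B$, we conclude $f=0$.

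For surjectivity, given $\gamma\in\cuad(A\oplus B,\Rmod)$, define $\gamma_A(a):=\gamma(a,0)$ and $\gamma_B(b):=\gamma(0,b)$; these are quadratic forms on $A$ and $B$ respectively (restrictions of a quadratic form to a subgroup). Define
\begin{equation*}
\tilde f(a,b) := \gamma(a,b)-\gamma_A(a)-\gamma_B(b) = b_\gamma\bigl((a,0),(0,b)\bigr).
\end{equation*}
Since $b_\gamma$ is $\mathbb{Z}$-bilinear, $\tilde f$ is $\mathbb{Z}$-bilinear and therefore factors through a homomorphism $f:A\otimes B\to\Rmod$. By construction $T(f\oplus\gamma_A\oplus\gamma_B)=\gamma$.

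The only mildly delicate step is the identification $\tilde f(a,b)=b_\gamma((a,0),(0,b))$, which uses the defining identity of $b_\gamma$ applied to the decomposition $(a,b)=(a,0)+(0,b)$; everything else is formal. There is no genuine obstacle here—the result is a direct consequence of the bilinear-form/quadratic-form correspondence for abelian groups, with $A\otimes B$ appearing as the natural universal recipient for the symmetric "cross term" between $A$ and $B$.
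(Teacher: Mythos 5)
Your proof is correct and follows essentially the same route as the paper: the paper exhibits the inverse map $W(\gamma)=\gamma_A\oplus\gamma_B\oplus b_\gamma|_{(A\oplus 0)\times(0\oplus B)}$, which is exactly the preimage you construct in your surjectivity step, and your injectivity check is the easy complementary direction. The only difference is cosmetic — you additionally verify well-definedness of $T$, which the paper leaves implicit.
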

\begin{proof}
We will see that 
\begin{align*}
    W: \cuad(A\oplus B,\Rmod) &\to \Hom(A\otimes B, \Rmod)\oplus \cuad(A,\Rmod)\oplus \cuad(B,\Rmod)\\
    \gamma &\mapsto \gamma_A+\gamma_B+b_\gamma|_{(A\oplus 0)\times (0)\oplus B},
\end{align*}is the inverse of $T$.
In fact,
\begin{align*}
T\circ W (\gamma)(a\oplus b)&= \gamma(a)+\gamma(b)+(\gamma(a\otimes b)-\gamma(a)-\gamma(b))\\
&=\gamma(a\oplus b),
\end{align*}
and 

\begin{align*}
W\circ T (f\oplus \gamma_A\oplus  \gamma_B)(a_1\otimes b_1\oplus a_2\oplus b_2)&= b_{T((f\oplus \gamma_A\oplus  \gamma_B)}(a_1\otimes b_1)\oplus \gamma_A(a_2)\oplus \gamma_B(b_2)\\
&= b_{T((f\oplus 0\oplus  0)}(a_1\otimes b_1)\oplus \gamma_A(a_2)\oplus \gamma_B(b_2)\\
&= f(a_1\otimes b_1)\oplus \gamma_A(a_2)\oplus \gamma_B(b_2).
\end{align*}
\end{proof}

\begin{corollary}\label{conteo}
If $A$ is a finite abelian group, then $$|\cuad(A,\Rmod)|=|A/2A| |S^2(A)|.$$
\end{corollary}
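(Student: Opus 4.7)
The plan is to reduce to cyclic summands via the structure theorem and then combine Propositions \ref{quad func cyclic} and \ref{formas cuadra de una suma} to count both sides of the claimed equality in terms of the invariant factors of $A$.

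Fix a decomposition $A \cong \bigoplus_{i=1}^r \mathbb{Z}/n_i\mathbb{Z}$. Iterating Proposition \ref{formas cuadra de una suma} (splitting off one cyclic summand at a time) produces an isomorphism
$$\cuad(A,\Rmod) \;\cong\; \bigoplus_{i=1}^r \cuad(\mathbb{Z}/n_i\mathbb{Z},\Rmod) \;\oplus\; \bigoplus_{1\le i<j\le r}\Hom(\mathbb{Z}/n_i\mathbb{Z}\otimes \mathbb{Z}/n_j\mathbb{Z},\Rmod).$$
Proposition \ref{quad func cyclic} gives $|\cuad(\mathbb{Z}/n_i\mathbb{Z},\Rmod)|=n_i\cdot\gcd(n_i,2)$ (covering both the odd and even cases uniformly). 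Combining this with the standard computations $\mathbb{Z}/n_i\mathbb{Z}\otimes_{\mathbb{Z}}\mathbb{Z}/n_j\mathbb{Z}\cong\mathbb{Z}/\gcd(n_i,n_j)\mathbb{Z}$ and $|\Hom(\mathbb{Z}/d\mathbb{Z},\Rmod)|=d$, one obtains
$$|\cuad(A,\Rmod)| \;=\; \prod_{i=1}^r n_i\gcd(n_i,2)\cdot\prod_{1\le i<j\le r}\gcd(n_i,n_j).$$

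For the right-hand side, the same decomposition gives $|A/2A|=\prod_i\gcd(n_i,2)$, while for the symmetric square I would use the standard identity $S^2(M\oplus N)\cong S^2(M)\oplus(M\otimes N)\oplus S^2(N)$, together with $S^2(\mathbb{Z}/n\mathbb{Z})\cong\mathbb{Z}/n\mathbb{Z}$ (the cyclic group $\mathbb{Z}/n\mathbb{Z}\otimes\mathbb{Z}/n\mathbb{Z}$ is already generated by the symmetric tensor $e\otimes e$, so the symmetry relation is vacuous). This yields
$$|S^2(A)| \;=\; \prod_{i=1}^r n_i\cdot\prod_{1\le i<j\le r}\gcd(n_i,n_j),$$
and multiplying by $|A/2A|$ reproduces exactly the expression for $|\cuad(A,\Rmod)|$ above.

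The only genuine subtlety is the identification $S^2(\mathbb{Z}/n\mathbb{Z})\cong\mathbb{Z}/n\mathbb{Z}$ (as opposed to something with an extra factor of $2$); once this is in hand, the proof is a bookkeeping comparison of two products indexed by the same invariant factors. Everything else is simply the iterated application of the two preceding propositions.
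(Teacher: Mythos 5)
Your proof is correct and follows essentially the same route as the paper: reduce to cyclic summands via Proposition \ref{formas cuadra de una suma} and the identity $S^2(M\oplus N)\cong S^2(M)\oplus (M\otimes N)\oplus S^2(N)$, then invoke Proposition \ref{quad func cyclic} for the cyclic case. The only difference is presentational — you expand everything into explicit products over invariant factors and make the cyclic base case $|\cuad(\mathbb{Z}/n\mathbb{Z},\Rmod)|=n\gcd(n,2)$ explicit, whereas the paper phrases the argument as an induction on a splitting $A=B\oplus C$ and leaves that base case implicit.
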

\begin{proof}
Recall that $S^2(A\oplus B)\cong S^2(A)\oplus S^2(B)\oplus A\otimes B$ for any pair of abelian groups. In particular, 
\begin{equation}\label{potencia simetirca}
|S^2(A\oplus B)|= |S^2(A)||S^2(B)||A\otimes B|.
\end{equation}
If $A=B\oplus C$, by Proposition \ref{formas cuadra de una suma} we have 

\begin{align*}
    |\cuad(A,\Rmod)|&=|B/2B| |S^2(B)|||C/2C||S^2(C)||B\otimes C|\\
    &= (|B/2B||C/2C|)(|S^2(B)||S^2(B)||B\otimes C)|)\\
    &=|A/2A||S^2(A)|.
\end{align*}
\end{proof}

\subsection{A double complex for an abelian group.}

To describe the obstruction to the existence of a solution of the hexagon equation of a 3-cocycle $\omega \in Z^3(A,U(1))$, in this section we will define a double complex associated to an abelian group.

Let $A$ and $N$ be abelian groups.
We define a double complex by $D^{p,q}(A,N)=0$ if $p$ or $q$ are zero and$$D^{p,q}(A ,N):= \Maps(A^p | A^q; N),\  \  p,q > 0$$
 with horizontal and vertical differentials the standard differentials, that is,
  $$\delta_h : D^{p,q}(A, N)=C^p(A,C^q(A,N)) \to
D^{p+1,q}(A, N)=C^{p+1}(A,C^q(A,N))$$ and 
 $$\delta_v : D^{p,q}(A, N)=C^q(A,C^p(A,N)) \to
D^{p,q+1}(A,N)=C^{q+1}(A,C^p(A,N))$$ 
defined by the equations
\begin{align*}(\delta_hF)(g_1,..., g_{p+1} || k_1,...,k_q)
= F(g_2,&...,g_{p+1}|| k_1,...,k_q)\\ & + \sum_{i=1}^{p}(-1)^i F(g_1,...,g_ig_{i+1},
..,g_{p+1}|| k_1,...,k_q )\\ &+ (-1)^{p+1} F(g_1,...,g_p||
k_1,...,k_q ) \\
(\delta_vF)(g_1,...,g_p|| k_1, ..., k_{q+1} ) = 
F(g_1,&...,g_{p}|| k_2 ,..., k_{q+1}) \\ &+
\sum_{j=1}^{q}(-1)^j F(g_1,...,g_p|| k_1, ...,k_jk_{j+1},..., k_{q+1} )\\
&+ (-1)^{q+1} F(g_1,...,g_p|| k_1, ..., k_{q}).
\end{align*}

For future reference it will be useful to describe the equations that define a
2-cocycle and the coboundary of a 1-cochains:

\begin{itemize}
\item $\operatorname{Tot}^0(D^{*,*}(A,N))=\operatorname{Tot}^1(D^{*,*}(A,N))=0$,
\item $\operatorname{Tot}^2(D^{*,*}(A,N))=\Maps(A|A,N)$,
\item $\operatorname{Tot}^3(D^{*,*}(A,N))=\Maps(A|A^2,N)\oplus \Maps(A^2|A,N)$,
\end{itemize}
Thus,

\begin{itemize}
\item $H^2(\operatorname{Tot}^*(D^{*,*}(A,N)))=\Hom(A^{\otimes 2},N)$ the abelian group of all bicharacters from $A$ to $N$.
\item for $f\in \operatorname{Tot}^2(D^{*,*}(A,N)),$
\begin{align*}
\delta_h(f)(x,y||z)=f(y||z)-f(x+y||z)+f(x||z)\\
\delta_v(f)(x||y,z)=f(x||z)-f(x||y+z)+f(x||y)
\end{align*}
\end{itemize}
Let us describe the elements $$(\alpha, \beta )\in Z^3(\operatorname{Tot}^*(D^{*,*}(A,N))),$$
$\alpha\in C^1(A,Z^2(A,N))$, that is $\alpha: A|A^2\to N$ such that $$\alpha(x;a,b)+\alpha(x;a+b,c)=\alpha(x;a,b+c)+\alpha(x;b,c)$$ for all $x, a,b, c\in A$,

$\beta\in C^1(A,Z^2(A,N))$, that is 
$\beta: A^2|A\to N$ such that $$\beta(x+y,z;a)+\beta(x,y;a)=\beta(x,y+z,a)+\beta(y,z;a)$$
for all $x,y,z\in A, a\in A$, and $ \delta_h(\alpha)=-\delta_v(\beta)$, that is, $$\alpha(x;a,b)+\alpha(y;a,b)-\alpha(x+y;a,b)=\beta(x,y;a+b)-\beta(x,y;a)-\beta(x,y;b),$$for all $x,y\in A, a,b\in B$.

\subsection{Obstruction}

Consider  the group homomorphism 
\[
\tau_n: H^n(A,N)\to H^n(\operatorname{Tot}^*(D^{*,*}(A,N)))
\]
induced by the cochain map 
\begin{align*}
\tau: C^*(A,N) &\to C^n(\operatorname{Tot}^*(D^{*,*}(A,N)))\\ 
\alpha &\mapsto \oplus_{p=1}^{n-1}\alpha_p,
\end{align*}
where $\alpha_p\in \Maps(A^{p}| A^{n-p},N)$ is defined by $$\alpha_p(a_1,\ldots,a_p|a_{p+1},\ldots,a_n)= \sum_{\pi \in \rm{Shuff}(p,n-p)} (-1)^{\epsilon(\pi)}\alpha(a_{\lambda(1)},\ldots, a_{\pi(n)}).$$
For every $n\in \mathbb Z^{\geq 2}$, we define the suspension  homomorphism from
\begin{align*}
s_n:H^n_{ab}(A,N)&\to H^n(A,N)\\
\oplus_{p_1,\ldots,p_r\geq 1: r+\sum_{i=1}^rp_i=n+1} \alpha_{p_1,\ldots,p_r} &\mapsto \alpha_n.
\end{align*}

The group homomorphism
\begin{align*}
 H^2(\operatorname{Tot}^*(D^{*,*}(A,N)))=\Hom(A^{\otimes 2},N)&\to Z^3_{ab}(A,N)\\
c &\mapsto (0,c),
\end{align*}
induces a group homomorphism $\iota: H^2(\operatorname{Tot}^*(D^{*,*}(A,N))\to H^3_{ab}(A,N)$.

The following result shows that the shuffle homomorphism can be interpreted as the obstruction to the hexagon equation.

\begin{theorem}
Let $A$ and $N$ be a abelian groups. Then, the sequence

\begin{equation*}\label{sequence}
\begin{tikzcd}
0\ar{r}  & H^2_{ab}(A,N)\ar{r}{s_2}  & H^2(A,N) \ar{r}{\tau_2} & H^2(\operatorname{Tot}^*(D^{*,*}(A,N))  \ar{lld}{\iota}& \\& H^3_{ab}(A,N)  \ar{r}{s_3}
    & H^3(A,N) \ar{r}{\tau_3} & H^3(\operatorname{Tot}^*(D^{*,*}(A,N))) .
\end{tikzcd}
\end{equation*}
is exact. 
\end{theorem}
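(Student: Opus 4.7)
The plan is to verify exactness at each of the five positions directly, exploiting the structural fact that in the low degrees involved ($n \le 4$) the abelian cochain group splits as
\[
C^n_{ab}(A,N) = C^n(A,N) \oplus \operatorname{Tot}^{n-1}(D^{*,*}(A,N)),
\]
because any composition $(p_1,\ldots,p_r)$ with $r \ge 3$ parts forces $n \ge 5$. The formulas (i)--(iii) of Section~2, together with the definition of $\partial$ on low-degree pieces, translate into three concrete identifications: (1) $\partial g = \delta g$ for $g \in C^1_{ab} = C^1$, and this is automatically symmetric; (2) $\partial f = (\delta f,\; f(x,y)-f(y,x))$ for $f \in C^2_{ab} = C^2$; and (3) $(\omega,c) \in C^3 \oplus \operatorname{Maps}(A|A,N)$ is an abelian $3$-cocycle exactly when $\omega \in Z^3(A,N)$ together with $\omega_1 = -\delta_v c$ and $\omega_2 = \delta_h c$, i.e.\ $\tau_3(\omega) = d_{\operatorname{Tot}}(c)$ in $\operatorname{Tot}^3(D^{*,*})$. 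Since $\operatorname{Tot}^1(D) = 0$, moreover, $H^2(\operatorname{Tot}^*)$ has no coboundaries and is simply $\operatorname{Hom}(A^{\otimes 2},N)$.

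With these identifications in hand, each exactness claim unwinds routinely. At $H^2_{ab}$: if $[f] \in H^2_{ab}$ maps to $0$ in $H^2(A,N)$, then $f = \delta g = \partial g$ is already an abelian coboundary, so $s_2$ is injective. At $H^2(A,N)$: the map $\tau_2$ sends $[f]$ to the bicharacter $(x|y)\mapsto f(x,y)-f(y,x)$, which is well-defined because the antisymmetrization of any $\delta g$ vanishes; since $\operatorname{Tot}^1=0$ has no boundaries, $\tau_2[f]=0$ forces $f$ to be symmetric, which is exactly the image of $s_2$. At $H^2(\operatorname{Tot}^*)$: $\iota[c]=0$ means $(0,c) = \partial f$ for some $f \in C^2_{ab}$, which by (2) above forces $\delta f = 0$ and $c = f(x,y) - f(y,x)$, i.e.\ $c \in \operatorname{im}(\tau_2)$.

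For exactness at $H^3_{ab}$, note that $s_3 \iota = 0$ is immediate. Conversely, if $s_3[(\omega,c)] = [\omega] = 0$, choose $g \in C^2$ with $\delta g = \omega$; subtracting $\partial g = (\omega,\, g(x,y)-g(y,x))$ from $(\omega,c)$ yields $(0,c')$ with $c' := c - (g(x,y)-g(y,x))$, and $c'$ is forced to be a bicharacter because $(0,c')$ remains an abelian $3$-cocycle. Hence $[(\omega,c)] = \iota[c']$. For exactness at $H^3(A,N)$, invoke identification (3): if $[\omega] = s_3[(\omega,c)]$ then $\tau_3(\omega) = d_{\operatorname{Tot}}(c)$, so $\tau_3[\omega] = 0$; conversely, if $\tau_3[\omega] = 0$, pick $c \in D^{1,1}$ solving $d_{\operatorname{Tot}}(c) = \tau_3(\omega)$, and then $(\omega,c) \in Z^3_{ab}(A,N)$ with $s_3[(\omega,c)] = [\omega]$.

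The main technical obstacle is not in any individual exactness step but in justifying identification (3) and the very definition of $\tau_n$: one must verify that the shuffle map $\tau: C^*(A,N) \to \operatorname{Tot}^*(D^{*,*}(A,N))$ is genuinely a cochain map (an Eilenberg--Zilber-type identity between $\delta$ and the total differential $d = \delta_h + (-1)^p \delta_v$), and pin down the signs so that the shuffle components $\omega_1 \in D^{1,2}$ and $\omega_2 \in D^{2,1}$ match $-\delta_v c$ and $\delta_h c$ under the $\omega$-parts of formulas (ii) and (iii). Once this sign and shuffle bookkeeping is settled, the five exactness assertions become direct verifications from the coboundary formulas already exhibited in Section~2.
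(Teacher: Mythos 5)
Your proof is correct and follows essentially the same route as the paper's: direct verification of exactness at each position from the explicit low-degree coboundary formulas, using that $(\omega,c)\in Z^3_{ab}(A,N)$ exactly when $\omega\in Z^3(A,N)$ and $\tau_3(\omega)$ is the total coboundary of $c$. If anything, your write-up is more complete than the paper's, which omits the exactness checks at $H^2_{ab}(A,N)$ and at $H^2(\operatorname{Tot}^*(D^{*,*}(A,N)))$ that you carry out explicitly.
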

\begin{proof}
The shuffle homomorphism $\tau_2:H^2(A,N) \to H^2(\operatorname{Tot}^*(D^{*,*}(A,N))$ is given  by  $\tau(\alpha)(x,y)=\alpha(x,y)-\alpha(y,x)$, thus it is clear that the sequence is exact in $H^2(A,N)$. 

An abelian 3-cocycle  is in the kernel of the suspension map if it is cohomologous to an abelian 3-cocycle of the form $(0,c)$. But then $c\in \Hom(A^{\ot 2},N)=  H^2(\operatorname{Tot}^*(D^{*,*}(A,N))$, hence the sequence is exact in $H^3_{ab}(A,N)$. Finally, 
If $\omega \in Z^3(A,N)$, then $\tau(\omega)= (\alpha_\omega,\beta_\omega)$, where 

\begin{align*}
\alpha_\omega(x|y,z)&=\omega(x,y,z)-\omega(y,x,z)+\omega(y,z,x)\\
\beta_\omega(x,y|z)&= \omega(x,y,z)-\omega(x,z,y)+\omega(z,x,y).
\end{align*}
Thus, $[(\alpha_\omega,\beta_\omega)]=0$, if and only if there is  $c:A\times A\to N$ such that $$\delta_v(c)=\alpha_\omega, \ \  \  -\delta_h(c)=\beta_\omega,$$that is, $[\tau(\omega)]=0$ if and only if there is  $c:A\times A\to N$  such that $(\omega,c)\in Z^3_{ab}(A,N)$. Thus, the sequence is exact in $H^3(A,N)$.
\end{proof}

\begin{corollary}[Total obstruction]
A gauge class of a solution of the pentagon equation $\omega \in H^3(A,\Rmod)$ admits a solution of the hexagon equation if and only if $\tau(\omega)=0$ in $H^3(\operatorname{Tot}^*(D^{*,*}(A,\Rmod)))$. 
\end{corollary}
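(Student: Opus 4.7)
The plan is to obtain this corollary as an immediate consequence of the exactness of the long sequence established in the preceding theorem, specifically at the node $H^3(A,\Rmod)$.

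The first step is to reformulate the existence question. By definition, a solution of the hexagon equation for a pentagon cocycle $\omega$ is a map $c:A\times A\to \Rmod$ such that $(\omega,c)\in Z^3_{ab}(A,\Rmod)$, and the suspension $s_3: H^3_{ab}(A,\Rmod)\to H^3(A,\Rmod)$ sends $[(\omega,c)]$ to $[\omega]$. Hence the gauge class $[\omega]\in H^3(A,\Rmod)$ admits a hexagon solution if and only if it lies in $\operatorname{Im}(s_3)$.

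Before invoking exactness, I would verify that the property of admitting a hexagon solution is well-defined on $H^3(A,\Rmod)$, rather than merely on $Z^3(A,\Rmod)$. A direct inspection of the abelian coboundary formulas for a 2-cochain $u\in C^2_{ab}(A,\Rmod)$ shows that $\partial u = (\delta u,\, u - u^{\mathrm{op}})$, where $u^{\mathrm{op}}(x,y):=u(y,x)$. Consequently, if $(\omega,c)\in Z^3_{ab}(A,\Rmod)$ and $\omega'=\omega+\delta u$, then $(\omega',\, c + u - u^{\mathrm{op}})\in Z^3_{ab}(A,\Rmod)$, so every representative of $[\omega]$ admits a hexagon completion as soon as one does.

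Finally, by exactness of the sequence at the $H^3(A,\Rmod)$ term, one has $\ker(\tau_3)=\operatorname{Im}(s_3)$. Combining this with the reformulation yields exactly the corollary: $[\omega]$ admits a solution of the hexagon equation if and only if $\tau_3([\omega])=0$ in $H^3(\operatorname{Tot}^*(D^{*,*}(A,\Rmod)))$. All of the substantive work is already encapsulated in the preceding theorem, so the only ``obstacle'' in this corollary is the gauge-invariance verification above, which is routine once the abelian coboundary $\partial u$ on a 2-cochain has been written out.
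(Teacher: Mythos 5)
Your proposal is correct and follows essentially the same route as the paper: the corollary is an immediate consequence of exactness of the sequence at the $H^3(A,N)$ node, where the proof of the theorem already establishes that $[\tau_3(\omega)]=0$ if and only if some $c$ completes $\omega$ to an abelian $3$-cocycle. Your additional check that admitting a hexagon completion is gauge-invariant (via $\partial u=(\delta u,\,u-u^{\mathrm{op}})$) is a correct and routine verification that the paper leaves implicit.
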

\qed

\begin{proposition}\label{kernel suspension}
Let $A$ be a finite  abelian group. 
\begin{enumerate}
\item $$\operatorname{ker}(s_3)\cong \widehat{S^2(A)}.$$
\item Under the isomorphism $\operatorname{Tr}: H^3_{ab}(A,\Rmod)\to \cuad(A,\Rmod)$ (see \eqref{Traza}), $\operatorname{ker}(s_3)$ corresponds to the subgroup $$\cuad_0(A,\Rmod)=\{q\in \cuad(A,\Rmod): o(a)q(a)=0, \forall a\in A\},$$
where $o(a)$ denotes the order of $a\in A$.
\end{enumerate}

\end{proposition}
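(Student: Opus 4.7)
The plan is to prove (2) first by transporting $\ker(s_3)$ through the Eilenberg--MacLane isomorphism $\operatorname{Tr}$ and identifying its image inside $\cuad(A,\Rmod)$, and then to deduce (1) by computing the group structure of $\cuad_0(A,\Rmod)$ explicitly. By the exact sequence of the previous theorem, $\ker(s_3)=\operatorname{image}(\iota)$, where $\iota$ sends a bicharacter $c\in\widehat{A\otimes A}$ to the class $[(0,c)]\in H^3_{ab}(A,\Rmod)$, which under $\operatorname{Tr}$ corresponds to the quadratic form $a\mapsto c(a,a)$. Since $c$ is bilinear, $o(a)\,c(a,a)=c(o(a)\,a,\,a)=0$, giving the easy inclusion $\operatorname{Tr}(\ker(s_3))\subseteq \cuad_0(A,\Rmod)$.

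For the reverse inclusion, given $q\in\cuad_0(A,\Rmod)$, I would fix a cyclic decomposition $A=\bigoplus_{k=1}^{r}\Z/n_k\Z$ with generators $e_k$. Iterating Proposition~\ref{formas cuadra de una suma}, $q$ decomposes uniquely as
\[
q(a)=\sum_{k}q_k(a_k)+\sum_{k<l}f_{kl}(a_k\otimes a_l),
\]
with $q_k\in\cuad(\Z/n_k,\Rmod)$ and $f_{kl}\in\Hom(\Z/n_k\otimes\Z/n_l,\Rmod)$. Applying the hypothesis $o(a)q(a)=0$ at $a=e_k$ forces $n_k q_k(e_k)=0$, i.e.\ $q_k(e_k)\in\frac{1}{n_k}\Z/\Z$. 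I would then define a bicharacter $c$ by the upper-triangular assignment
\[
c(e_k,e_k)=q_k(e_k),\qquad c(e_k,e_l)=f_{kl}(e_k\otimes e_l)\text{ for }k<l,\qquad c(e_k,e_l)=0\text{ for }k>l,
\]
extended bilinearly to $A\otimes A$. The relation $n_k q_k(e_k)=0$ is exactly what makes the diagonal entries consistent with $n_k e_k=0$; the off-diagonal entries inherit compatibility from the bilinearity of each $f_{kl}$. A direct expansion yields $c(a,a)=q(a)$, and since $\operatorname{Tr}$ is an isomorphism this proves $\ker(s_3)\cong \cuad_0(A,\Rmod)$, which is (2).

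For (1), the order condition defining $\cuad_0$ interacts compatibly with Proposition~\ref{formas cuadra de una suma}, yielding
\[
\cuad_0(A\oplus B,\Rmod)=\Hom(A\otimes B,\Rmod)\oplus\cuad_0(A,\Rmod)\oplus\cuad_0(B,\Rmod),
\]
the $\Hom$-summand being automatic since bilinear forms always land in $\cuad_0$. Combined with the identification $\cuad_0(\Z/n,\Rmod)\cong \Z/n\Z$ (which follows from Proposition~\ref{quad func cyclic}, once one notes that the extra condition $nq(1)=0$ cuts the full cyclic group in half when $n$ is even and is automatic when $n$ is odd), together with $S^2(\Z/n)\cong\Z/n\Z$ and $S^2(A\oplus B)\cong S^2 A\oplus S^2 B\oplus(A\otimes B)$, an induction on the number of cyclic summands yields $\cuad_0(A,\Rmod)\cong S^2 A$ as abelian groups. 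Pontrjagin duality on the finite abelian group $S^2 A$ then supplies a (non-canonical) isomorphism $S^2 A\cong \widehat{S^2 A}$, giving (1). The main obstacle is the bicharacter construction in (2); though elementary once a cyclic decomposition is fixed, its well-definedness hinges precisely on the defining condition of $\cuad_0$, which is what pinpoints $\cuad_0$ as the correct obstruction subgroup.
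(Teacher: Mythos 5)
Your proposal is correct, but it reverses the paper's order of argument and uses a genuinely different mechanism for part (1). The paper proves (1) first and purely homologically: since $H^2_{ab}(A,\Rmod)=\operatorname{Ext}^1_{\mathbb{Z}}(A,\Rmod)=0$, the exact sequence of the preceding theorem collapses to $0\to H^2(A,\Rmod)\xrightarrow{\tau_2}\Hom(A^{\otimes 2},\Rmod)\to\ker(s_3)\to 0$; identifying the image of $\tau_2$ with $\Hom(\wedge^2A,\Rmod)$ and dualizing $0\to\wedge^2A\to A^{\otimes 2}\to S^2A\to 0$ against the divisible group $\Rmod$ yields $\ker(s_3)\cong\Hom(S^2(A),\Rmod)$ canonically, with no appeal to Pontrjagin self-duality of finite abelian groups. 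Part (2) is then dispatched tersely by checking the cyclic case (via Remark \ref{construccion 3-cociclo de un ciclicoc par}) and invoking Proposition \ref{formas cuadra de una suma}. You instead prove (2) first, by the easy inclusion $o(a)c(a,a)=c(o(a)a,a)=0$ together with an explicit upper-triangular bicharacter realizing any $q\in\cuad_0(A,\Rmod)$; this is more detailed than the paper's treatment and isolates exactly where the defining condition of $\cuad_0$ enters (well-definedness of the diagonal entries $c(e_k,e_k)=q_k(e_k)$ on $\mathbb{Z}/n_k\otimes\mathbb{Z}/n_k$). You then deduce (1) by an inductive order computation giving $\cuad_0(A,\Rmod)\cong S^2A$ followed by a non-canonical isomorphism $S^2A\cong\widehat{S^2(A)}$. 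Both routes are sound: the paper's buys a canonical isomorphism in (1) for free, while yours supplies an explicit section of $\operatorname{Tr}$ over $\cuad_0$ that the paper only gestures at; the only (harmless) concession is that your isomorphism in (1) is non-canonical, which the statement as written does not require.
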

\begin{proof}
Since $\Rmod$ is divisible and $A$ is finite, the group $H^2_{ab}(A,\Rmod)=\operatorname{Ext}_{\mathbb{Z}}^1(A,\Rmod)$ is null. Thus, by Theorem \ref{sequence} we have an exact sequence

$$0\to H^2(A,\Rmod)\overset{\tau}\to \Hom(A^{\otimes 2},\Rmod)\to \operatorname{ker}(s_3)\to 0.$$
The image of $\tau$ is $\Hom(\wedge^2 A,\Rmod)$, hence  
\begin{align*}
    \operatorname{ker}(s_3) &\cong \Hom(A^{\otimes 2},\Rmod)/\Hom(\wedge^2 A,\Rmod)\\
     &\cong \Hom(A^{\otimes 2}/\wedge^2 A,\Rmod)\\ &\cong \Hom(S^2(A),\Rmod),
\end{align*}where the last isomorphism is defined using the exact sequence 
\begin{align*}
    0\to \wedge^2A&\to A^{\otimes 2}\to S^2A\to 0. 
\end{align*}
Now we will prove the second part. If $A$ is cyclic the proposition follows by Remark \ref{construccion 3-cociclo de un ciclicoc par}. The general case follows from Proposition \ref{formas cuadra de una suma}, since the image of $\Hom(A\otimes B,\Rmod)$ by $T$ lies in $\cuad_0(A\oplus B,\Rmod)$.
\end{proof}

We will denote by $\mu_2=\{1,-1\}\subset \Uone \cong \Rmod$. 
\begin{theorem}\label{imagen de suspension}
Let $A $ be an abelian finite group. The canonical projection $\pi:A\to A/2A$ induces an isomorphism between the images of the respective suspension maps of $H^3_{ab}(A,\Rmod)$ and $H^3_{ab}(A/2A,\Rmod)$. 

Moreover, for  an elementary abelian 2-group  $(\mathbb{Z}/2\mathbb{Z})^{\oplus n}$,   $$\operatorname{Im}(s_3)\cong H^3(\mathbb{Z}/2\mathbb{Z},\mu_2)^{\oplus n}= (\mathbb{Z}/2\mathbb{Z})^{\oplus n}.$$

\end{theorem}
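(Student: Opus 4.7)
The plan is to reduce everything to cyclic factors via the quadratic-form description. By Proposition \ref{kernel suspension}(2) combined with \eqref{Traza}, $\operatorname{Im}(s_3^A)$ is isomorphic to the quotient $\cuad(A,\Rmod)/\cuad_0(A,\Rmod)$. Using Corollary \ref{conteo} together with $|\cuad_0(A,\Rmod)| = |S^2(A)|$ (coming from the isomorphism $\cuad_0\cong\widehat{S^2(A)}$ in Proposition \ref{kernel suspension}(1)), this quotient has order $|A/2A|$. Applying the same analysis to $A/2A$, which satisfies $2(A/2A)=0$, yields $|\operatorname{Im}(s_3^{A/2A})|=|A/2A|$, so both images are \emph{a priori} of the same cardinality.

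Next, I would fix a cyclic decomposition $A=\bigoplus_i \mathbb{Z}/n_i\mathbb{Z}$ and iterate Proposition \ref{formas cuadra de una suma}. The proof of Proposition \ref{kernel suspension} shows the cross-term $\Hom(\mathbb{Z}/n_i\mathbb{Z}\otimes \mathbb{Z}/n_j\mathbb{Z},\Rmod)$ is contained in $\cuad_0$, so
\begin{equation*}
\cuad(A,\Rmod)/\cuad_0(A,\Rmod)\ \cong\ \bigoplus_i \cuad(\mathbb{Z}/n_i\mathbb{Z},\Rmod)/\cuad_0(\mathbb{Z}/n_i\mathbb{Z},\Rmod).
\end{equation*}
By Proposition \ref{quad func cyclic}, each cyclic factor is trivial when $n_i$ is odd (since then $\cuad=\cuad_0$) and equals $\mathbb{Z}/2\mathbb{Z}$ when $n_i$ is even (generated by $[q_{2n_i}]$, with $\cuad_0=2\cuad$). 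Since $A/2A=\bigoplus_{n_i\text{ even}}\mathbb{Z}/2\mathbb{Z}$ decomposes compatibly and $\pi$ restricts on each summand to the mod-$2$ quotient, the two quotient descriptions match factor by factor, providing the desired isomorphism induced by $\pi$.

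Specializing to $A=(\mathbb{Z}/2\mathbb{Z})^{\oplus n}$ now yields
\begin{equation*}
\operatorname{Im}(s_3^A)\ \cong\ \bigoplus_{i=1}^n \cuad(\mathbb{Z}/2\mathbb{Z},\Rmod)/\cuad_0(\mathbb{Z}/2\mathbb{Z},\Rmod)\ \cong\ (\mathbb{Z}/2\mathbb{Z})^{\oplus n},
\end{equation*}
and the identification with $H^3(\mathbb{Z}/2\mathbb{Z},\mu_2)^{\oplus n}$ follows from $H^3(\mathbb{Z}/2\mathbb{Z},\mu_2)\cong\mathbb{Z}/2\mathbb{Z}$. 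I expect the main subtlety to lie in verifying that the factor-by-factor isomorphism is genuinely canonical with respect to $\pi$ rather than an artifact of the chosen cyclic decomposition; this should follow from the naturality of $\operatorname{Tr}$ and of the splitting produced in Proposition \ref{formas cuadra de una suma}, which together ensure that the $\mathbb{Z}/2$-summand attached to each even cyclic factor corresponds under $\pi$ precisely to the $\mathbb{Z}/2$-summand of $A/2A$ obtained from that factor.
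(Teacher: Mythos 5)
Your counting argument and the factor-by-factor computation of $\cuad(A,\Rmod)/\cuad_0(A,\Rmod)$ are correct and follow the same strategy as the paper (which obtains $|\operatorname{Im}(s_3)|=|A/2A|$ from Corollary \ref{conteo}, Proposition \ref{kernel suspension} and the exact sequence, exactly as you do). The gap is precisely the step you flagged as ``the main subtlety'': that the isomorphism is \emph{induced by} $\pi$. This does not follow from naturality of $\operatorname{Tr}$, and your own cyclic reduction, pushed one step further, shows it fails. Take $A=\mathbb{Z}/4\mathbb{Z}$. The generator of $\cuad(\mathbb{Z}/2\mathbb{Z},\Rmod)/\cuad_0(\mathbb{Z}/2\mathbb{Z},\Rmod)$ is the class of $q_4(m)=m^2/4$; its pullback along $\pi:\mathbb{Z}/4\mathbb{Z}\to\mathbb{Z}/2\mathbb{Z}$ is the form $m\mapsto m^2/4=2q_8(m)$ on $\mathbb{Z}/4\mathbb{Z}$, which satisfies $o(a)q(a)=0$ for every $a$ and hence lies in $\cuad_0(\mathbb{Z}/4\mathbb{Z},\Rmod)$. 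So the map $\cuad(A/2A,\Rmod)/\cuad_0(A/2A,\Rmod)\to\cuad(A,\Rmod)/\cuad_0(A,\Rmod)$ induced by $\pi$ is \emph{zero} on this factor even though both sides are $\mathbb{Z}/2\mathbb{Z}$; equivalently, the inflation map $H^3(\mathbb{Z}/2\mathbb{Z},\Rmod)\to H^3(\mathbb{Z}/4\mathbb{Z},\Rmod)$ is zero, because the generator of $\widehat{\mathbb{Z}/2\mathbb{Z}}$ pulls back to twice a generator of $\widehat{\mathbb{Z}/4\mathbb{Z}}$. The same collapse occurs on every cyclic factor $\mathbb{Z}/n_i\mathbb{Z}$ with $4\mid n_i$, so the ``factor-by-factor matching'' only works when each $n_i$ is odd or congruent to $2$ modulo $4$.

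In fairness, the paper's own proof has the identical gap: it asserts without justification that ``$\pi^*$ is an injective map between the images of the suspension maps,'' which the $\mathbb{Z}/4\mathbb{Z}$ example contradicts. What your argument (like the paper's) genuinely establishes is the \emph{abstract} isomorphism $\operatorname{Im}(s_3)\cong A/2A$, each even cyclic factor contributing one $\mathbb{Z}/2\mathbb{Z}$, and that is all that is needed for Corollary \ref{corolario secesion}. Your treatment of the elementary abelian case is sound, since there $\pi$ is the identity and the $n$ pulled-back generators are independent modulo $\cuad_0$. But the first assertion of the theorem, read literally as a statement about the map induced by the canonical projection, is false for $A=\mathbb{Z}/4\mathbb{Z}$, and deferring the issue to naturality cannot repair it; the honest fix is to weaken the claim to an equality of orders (or an abstract isomorphism with $A/2A$).
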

\begin{proof}
By Corollary \ref{conteo}, Proposition \ref{kernel suspension} and Theorem \ref{sequence}, we have that $|\operatorname{Im}(s_3)|=|A/2A|$. In particular the size of 
the image of the suspension maps of $H^3_{ab}(A,\Rmod)$ and $H^3_{ab}(A/2A,\Rmod)$ are equal.  Since $\pi^*$ is an inyective map between the image of the suspension maps it is an isomorphisms.

Let $(\mathbb{Z}/2\mathbb{Z})^{\oplus n}$ be  an  elementary abelian 2-group.
Recall that $H^3(\mathbb{Z}/2\mathbb{Z},\Rmod)=H^3(\mathbb{Z}/2\mathbb{Z},\mu_2)\cong \mathbb{Z}/2\mathbb{Z} $. Using the Remark \ref{construccion 3-cociclo de un ciclicoc par} we have an injective group homomorphisms   $H^3(\mathbb{Z}/2\mathbb{Z},\mu_2)^{\oplus n}\to \operatorname{Im}(s_3)$, which is an isomorphism because both groups have the same order.
\end{proof}

\begin{corollary}\label{corolario secesion}
For any abelian group we have an exact sequence 
\begin{equation}\label{susesion dos}
0\to S^2(\widehat{A})\to H^3_{ab}(A,\Rmod)\to A/2A\to 0.
\end{equation}
\end{corollary}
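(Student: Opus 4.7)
The plan is to extract the desired sequence from the tautological short exact sequence
$$0 \to \ker(s_3) \to H^3_{ab}(A,\Rmod) \to \operatorname{Im}(s_3) \to 0$$
associated with the suspension map $s_3: H^3_{ab}(A,\Rmod) \to H^3(A,\Rmod)$. Both endpoints have essentially been computed already by Proposition \ref{kernel suspension} and Theorem \ref{imagen de suspension}; the task is to restate them in the form appearing in the corollary.

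For the kernel, Proposition \ref{kernel suspension} gives $\ker(s_3) \cong \widehat{S^2(A)}$. To land in the shape $S^2(\widehat{A})$, I would invoke Pontryagin duality for finite abelian groups: the natural pairing yields an isomorphism $\widehat{A\otimes B}\cong \widehat{A}\otimes \widehat{B}$, which one can verify by reduction to cyclic summands (where both sides are $\mathbb{Z}/\gcd(|A|,|B|)\mathbb{Z}$), and this isomorphism is $\mathbb{S}_2$-equivariant under the swap of tensor factors. Passing to the symmetric quotient then yields $\widehat{S^2(A)} \cong S^2(\widehat{A})$. For the image, Theorem \ref{imagen de suspension} directly supplies $\operatorname{Im}(s_3)\cong A/2A$: the canonical projection $\pi: A\to A/2A$ induces an isomorphism between the images of the suspension maps of $A$ and $A/2A$, and on the elementary abelian $2$-group $A/2A \cong (\mathbb{Z}/2\mathbb{Z})^{\oplus n}$ the image is canonically identified with the group itself.

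Splicing these two identifications into the short exact sequence above produces the claimed $0 \to S^2(\widehat{A}) \to H^3_{ab}(A,\Rmod) \to A/2A \to 0$. I expect the only subtle point to be the canonical identification $\widehat{S^2(A)} \cong S^2(\widehat{A})$: the orders coincide by Corollary \ref{conteo}, but a clean argument requires some care about the compatibility of Pontryagin duality with tensor products and symmetric powers in the finite abelian setting. Once that naturality is pinned down, no further computation is needed.
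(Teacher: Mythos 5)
Your proposal is correct and is exactly the argument the paper intends: the corollary is stated with no written proof, the implicit one being precisely the splice of $\ker(s_3)\cong\widehat{S^2(A)}$ from Proposition \ref{kernel suspension} and $\operatorname{Im}(s_3)\cong A/2A$ from Theorem \ref{imagen de suspension} into the tautological short exact sequence for $s_3$. Your extra care about the identification $\widehat{S^2(A)}\cong S^2(\widehat{A})$ (valid for $A$ finite, by reduction to cyclic summands) is a detail the paper silently elides, and it is worth noting that both your argument and the paper's ingredients require $A$ finite despite the corollary's phrasing ``for any abelian group.''
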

\qed

\begin{remark}
 A related result is established by Mason and
Ng in \cite[Lemma 6.2]{Mason-Ng}.
\end{remark}

\subsection{Explicit abelian 3-cocycles}

Abelian 3-cocycles for the group $\mathbb{Z}/2\mathbb{Z}\oplus \mathbb{Z}/2\mathbb{Z}$ were study in detail in \cite{bulacu2011braided}, for $\mathbb{Z}/m\mathbb{Z}\oplus \mathbb{Z}/n\mathbb{Z}$ in \cite{huang2014braided} and in full generality in \cite{quinn1999group}.

Let $A$ be a finite abelian group  and  $A=A_1\oplus A_2$ the canonical decomposition, where $A_1$ has order a power of $2$ and $A_2$ has odd order. Since $H^3_{ab}(A,\Rmod)=H^3_{ab}(A_1,\Rmod)\oplus H^3_{ab}(A_2,\Rmod)$, and $H^3_{ab}(A_2,\Rmod)\cong S^2(\widehat{A_2}),$ the problem of a general description can be divided in the case of group of odd order and the case of abelian 2-groups. 

\subsubsection{Case of $A$ an odd abelian group}
If $A$ is an odd abelian group then map $A\to A, a\mapsto 2a$ is a group automorphism of $A$. Hence, given $q\in \cuad(A,\Rmod)$ the symmetric bilinear form $c:=\frac{1}{2}b_q\in \Hom(A^{\otimes 2} ,\Rmod)$, defines an abelian 3-cocycle $(0,c)\in Z^3_{ab}(A,\Rmod)$ such that $\operatorname{Tr}(c)=q$.

\subsubsection{Case of $A$ an abelian 2-group}

Let $A=\bigoplus_{i=1}^n \mathbb{Z}/2^{m_i}\mathbb{Z}$. Then by Corollary \ref{corolario secesion} and Proposition \ref{kernel suspension} we have a 
commutative diagram 
\begin{equation} \label{exacta su}
\begin{tikzcd}
0 \ar{r}& \ar{d} \ar{r}\Hom(S^2(A),\Rmod) &\ar{d}{\operatorname{Tr}} H^3_{ab}(A,\Rmod)\ar{r}{s_3}& (\mathbb{Z}/2\mathbb{Z})^{\oplus n}\ar{d}{=}\ar{r} & 0\\
0\ar{r}& \ar{r}\cuad_0(A,\Rmod) &\ar{r}{\pi} \cuad(A,\Rmod) &\ar{r} (\mathbb{Z}/2\mathbb{Z})^{\oplus n}\ar{r} & 0
\end{tikzcd}
\end{equation}
where the horizontal sequences are exact and the vertical morphisms are isomorphisms.

Let $q\in \cuad_0(A,\Rmod)$, then $c\in \Hom(A^{\otimes 2},\Rmod)$ defined by  

$$c(\vec{x},\vec{y})= \sum_ix_iy_iq(\vec{e_i})+\sum_{i<j}x_iy_jb_q(\vec{e_i},\vec{e_j})$$
is a such that $(0,c)\in Z^3_{ab}(A,\Rmod)$ represents $q$.

A set-theoretical section $j:  (\mathbb{Z}/2\mathbb{Z})^{\oplus n} \to \cuad(A,\Rmod) $ of $\pi$ in the exact sequence \eqref{exacta su} is defined easily as follows 

$$j(\vec{y})(\vec{x})=\sum_i y_ix_i^2/2^{m_i+1}.$$
Abelian 3-cocycles representing the $j(\vec{e_j})'s$ are constructed as the pullback by the projection $\pi_j: \bigoplus_{i=1}^n \mathbb{Z}/2^{m_i}\to \mathbb{Z}/2^{m_j}$ of the abelian 3-cocycle $(w,c)\in Z^3(\mathbb{Z}/2^{m_j},\Rmod)$ defined in Remark \ref{construccion 3-cociclo de un ciclicoc par}.

As a consequence of the previous discussion we have the following result.
\begin{proposition}\label{Prop square trivial}
Let $A$ be an abelian group. Every abelian 3-cohomology class has a representative 3-cocycle $(\omega,c)\in Z^3_{ab}(A,\Rmod)$ where $\omega(a,b,c)\in \{0,\frac{1}{2}\}\subset \Rmod$.
\end{proposition}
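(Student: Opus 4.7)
The plan is to reduce the statement to the two cases (odd abelian group and abelian 2-group) already handled in the preceding discussion, and then observe that $\{0,\tfrac{1}{2}\}\subset\Rmod$ is the subgroup $\mu_2$, so the property ``$\omega$ takes values in $\{0,\tfrac{1}{2}\}$'' is closed under addition of cocycles. Throughout I assume $A$ is finite, as in the rest of this section.

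First I would use the canonical decomposition $A=A_1\oplus A_2$, where $A_1$ is the 2-primary part and $A_2$ has odd order. Since $H^3_{ab}(A,\Rmod)=H^3_{ab}(A_1,\Rmod)\oplus H^3_{ab}(A_2,\Rmod)$ and since the pointwise sum of two maps $A^3\to\{0,\tfrac{1}{2}\}$ (viewed as maps into the subgroup $\mu_2\subset\Rmod$) again lands in $\{0,\tfrac{1}{2}\}$, it suffices to produce a representative with the desired property for each of $A_1$ and $A_2$ separately. Pulled back along the projections $A\to A_i$, these combine into a representative on $A$ whose $\omega$-part takes values in $\{0,\tfrac{1}{2}\}+\{0,\tfrac{1}{2}\}=\{0,\tfrac{1}{2}\}$.

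For $A_2$ the case is immediate: the discussion of $A$ odd in Section 4.4.1 produces a representative of the form $(0,c)$ with $c=\tfrac{1}{2}b_q$, so $\omega\equiv 0$ is trivially of the required form. For $A_1=\bigoplus_{i=1}^n\mathbb{Z}/2^{m_i}\mathbb{Z}$ I would use the commutative diagram \eqref{exacta su} together with the set-theoretic section $j$ of $\pi$. Any class in $H^3_{ab}(A_1,\Rmod)$ maps via $\operatorname{Tr}$ to a quadratic form $q\in\cuad(A_1,\Rmod)$, which decomposes as $q=q_0+j(\vec y)$ with $q_0\in\cuad_0(A_1,\Rmod)$ and $\vec y\in(\mathbb{Z}/2\mathbb{Z})^{\oplus n}$. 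The class of $q_0$ lies in $\operatorname{ker}(s_3)=\Hom(S^2(A_1),\Rmod)$, so it has a representative of the form $(0,c_0)$ with $\omega=0$. The class of $j(\vec y)=\sum_i y_i\, j(\vec e_i)$ is realised, generator by generator, as the pullback under $\pi_i\colon A_1\to\mathbb{Z}/2^{m_i}\mathbb{Z}$ of the explicit 3-cocycle from Remark \ref{construccion 3-cociclo de un ciclicoc par}; for each such generator $\omega$ takes values in $\{0,\tfrac{1}{2}\}$, and a sum of finitely many such maps again takes values in the subgroup $\{0,\tfrac{1}{2}\}$.

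There is no serious obstacle: every ingredient has been assembled in the preceding subsections, and the only thing to verify is that the set of representatives whose $\omega$-component is valued in $\{0,\tfrac{1}{2}\}$ is closed under pointwise sum and pullback, which holds because $\{0,\tfrac{1}{2}\}=\mu_2$ is a subgroup of $\Rmod$.
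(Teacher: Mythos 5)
Your proposal is correct and follows essentially the same route as the paper, which states this proposition as an immediate consequence of the preceding discussion (the odd/2-primary decomposition, the representative $(0,\tfrac{1}{2}b_q)$ in the odd case, and the section $j$ realized by pullbacks of the cocycle of Remark \ref{construccion 3-cociclo de un ciclicoc par}). You have merely made explicit the closure of $\{0,\tfrac{1}{2}\}$ under addition, which is the only detail the paper leaves unstated.
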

\qed
\begin{remark}
Proposition \ref{Prop square trivial} implies that the cohomology class of the square power of an abelian 3-cocycle is zero. This result was  established  in \cite[Lemma 4.4 (ii)]{Ng-Total}. 
\end{remark}

\subsection{Partial obstructions}
Since the obstruction of the existence of a solution of the hexagon equation is an element in the total cohomology of a double complex, we can analyze the obstruction by partial obstructions as follows.
\begin{proposition}[Partial obstruction 1]\label{obstru 1}
Let $A$ and $N$ be abelian groups and $\omega \in Z^3(A,N)$. If  $\tau_3(\omega)=(\alpha_\omega,\beta_\omega)$, then the cohomology class of  $\alpha_\omega(a|-,-)\in Z^2(A,N)$ only depends on the cohomology class of $\omega$. If $\omega$ is in the image of the suspension map, then $$0=[\alpha_\omega(a|-,-)]\in H^2(A,N)$$
for all $a\in A$.
\end{proposition}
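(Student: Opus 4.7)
The plan is to proceed in three steps, invoking only the explicit form of $\alpha_\omega$ and the abelian 3-cocycle conditions (i)--(iii) from the preliminaries. First, I would verify that $\alpha_\omega(a|-,-) \in Z^2(A,N)$ for each fixed $a \in A$. This is part of the statement that $(\alpha_\omega,\beta_\omega) = \tau_3(\omega)$ is a cocycle in $\operatorname{Tot}^*(D^{*,*}(A,N))$: the vanishing of the total differential implies in particular $\delta_v(\alpha_\omega) = 0$, which unwinds to the 2-cocycle identity for $\alpha_\omega(a|-,-)$. It can also be checked by hand, using $\delta\omega = 0$ applied to a few rearrangements of the variables $(x,a,b,c)$, $(a,x,b,c)$, $(a,b,x,c)$, $(a,b,c,x)$.

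Second, I would establish that the class $[\alpha_\omega(a|-,-)] \in H^2(A,N)$ depends only on $[\omega] \in H^3(A,N)$. Since $\omega \mapsto \alpha_\omega$ is $\mathbb{Z}$-linear, it suffices to show that $\alpha_{\delta\eta}(a|-,-)$ is a 2-coboundary whenever $\eta \in C^2(A,N)$. Expanding
\[
\alpha_{\delta\eta}(a|b,c) = \delta\eta(a,b,c) - \delta\eta(b,a,c) + \delta\eta(b,c,a),
\]
three pairs of terms cancel, and the six surviving $\eta$-values regroup as
\[
\eta(a,b+c) - \eta(a,b) - \eta(a,c) + \eta(b,a) + \eta(c,a) - \eta(b+c,a) = \delta h_a(b,c),
\]
where $h_a : A \to N$ is the 1-cochain $h_a(b) := \eta(b,a) - \eta(a,b)$. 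Hence $\alpha_{\delta\eta}(a|-,-)$ is a coboundary, and the class in question descends to $H^3(A,N)$.

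Third, assume $[\omega] \in \operatorname{Im}(s_3)$: then there exists an abelian 3-cocycle $(\omega',c) \in Z^3_{ab}(A,N)$ with $[\omega'] = [\omega]$ in $H^3(A,N)$. The condition that the component of $\partial(\omega',c)$ in $\Maps(A|A^2,N)$ vanishes (equation (ii)) rearranges to
\[
\alpha_{\omega'}(x|y,z) = \omega'(x,y,z) - \omega'(y,x,z) + \omega'(y,z,x) = c(x|y+z) - c(x|y) - c(x|z),
\]
which is (the negative of) the 2-coboundary of the 1-cochain $y \mapsto c(a|y)$, and therefore trivial in $H^2(A,N)$ for every fixed $a$. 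Invoking the second step, $[\alpha_\omega(a|-,-)] = [\alpha_{\omega'}(a|-,-)] = 0$. The only delicate point is the bookkeeping in the middle paragraph; once one recognizes that the six surviving $\eta$-terms organize into $\delta h_a$ for the skew-symmetric $h_a$, the rest is pure unpacking of the abelian cocycle identity.
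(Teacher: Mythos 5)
Your proposal is correct and follows essentially the same route as the paper's proof: the identity $\alpha_{\delta\eta}(a|-,-)=\delta h_a$ with $h_a(b)=\eta(b,a)-\eta(a,b)$ is exactly the paper's computation (there $l_a=-h_a$), and the vanishing for $\omega$ in the image of $s_3$ is obtained, as in the paper, from the abelian cocycle condition $\delta_v(c)=\pm\alpha_{\omega'}$ combined with gauge invariance. Your write-up is merely a bit more explicit about checking the $2$-cocycle property and about passing from $\omega'$ back to $\omega$ via the first step.
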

\begin{proof}
For $(\omega,c) \in H^3(A,N)$ we have that $\delta_v(c)=\alpha_\omega$, then $[\alpha_\omega(a|-,-)]=0$
for all $a\in A$.

Let $u:A\times A\to N$, and $$w'(a,b,c)=w(a,b,c)+ u(a+b,c)+u(a,b)-u(a,b+c)-u(b,c).$$ Then $$\alpha_{\omega'}(a|c,d)=\alpha_w(a|b,c)+l_a(b)+l_a(c)-l_a(b+c),$$ where $l_a(b)=u(a,b)-u(b,a)$.
\end{proof}

Assume that $[\alpha_\omega(a|-,-)]=0$
for all $a\in A$. Thus, there exists $\eta\in C(A|A, N)$ such that $\delta_v(\eta)=\alpha_\omega$. We have that  $(0,\delta_h(\eta)+\beta_\omega) \in Z^3(\operatorname{Tot}^*(D^{*,*}(A,N)))$, thus 
$$\theta(\omega,\eta):=\delta_h(\eta)+\beta_\omega \in Z^2_{ab}(A,\Hom(A,N)).$$ 
In fact, 

\begin{align*}
    \delta_v(\delta_h(\eta)+\beta_\omega)&= \delta_v(\delta_h(\eta))+\delta_v(\beta_\omega)\\
    &=\delta_h(\delta_v(\eta))+\delta_v(\beta_\omega)\\
    &=\delta_h(\alpha_\omega)-\delta_h(\alpha_\omega)=0,\\
\end{align*}
that is, $\theta(\omega,\eta)(a,b,x+y)=\theta(\omega,\eta)(a,b,x)+\theta(\omega,\eta)(a,b,y)$.

The cohomology of $\theta(\omega,\eta)$ does not depend on the choice of $\eta$. In fact, if $\eta\in C(A|A, N)$ such that $\delta_v(\eta)=\alpha_\omega$, then 
$\mu:=\eta-\eta'\in C^1(A,\Hom(A,U(1)))$ and 
$$\theta(\omega,\eta)-\theta(\omega,\eta')=\delta_h(\eta-\eta')=\delta_h(\mu).$$Hence, we have defined a second obstruction  
\begin{align*}
\theta(\omega)\in \operatorname{Ext}^1_{\mathbb{Z}}(A,\Hom(A,N).
\end{align*}

\begin{corollary}[Obstruction 2]
Let $\omega \in Z^3(A,N)$, such that $0=[\alpha_\omega(a|-,-)]\in Z^2(A,N)$
for all $a\in A$. Then there exists $c:A\times A\to N$ such that $(\omega,c)\in Z^3_{ab}(A,N)$ if and only if $\theta(\omega)=0$. 
\end{corollary}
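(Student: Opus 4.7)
The plan is to exploit the setup preceding the corollary, where the obstruction $\theta(\omega)$ was constructed precisely to detect when the second component of $\tau_3(\omega)$ can be killed after already killing the first. Both directions reduce to producing (or recognizing) a suitable cochain $c:A\times A\to N$ and reading off the two cocycle conditions (ii), (iii) from the excerpt.

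For the $(\Rightarrow)$ direction, suppose $(\omega,c)\in Z^3_{ab}(A,N)$. By conditions (ii) and (iii), $c$ itself satisfies $\delta_v(c)=\alpha_\omega$ and $\delta_h(c)=-\beta_\omega$ (up to the sign convention fixed in the proof of Theorem \ref{sequence}). In particular, $c$ is a lift of $\alpha_\omega$ under $\delta_v$, so one may choose $\eta:=c$ in the definition of $\theta(\omega,\eta)$. Then $\theta(\omega,c)=\delta_h(c)+\beta_\omega=0$, which immediately shows $\theta(\omega)=0$ in the relevant quotient.

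For the $(\Leftarrow)$ direction, start by using the hypothesis $[\alpha_\omega(a|-,-)]=0$ in $H^2(A,N)$ for every $a\in A$ to assemble a cochain $\eta\in \operatorname{Maps}(A|A,N)$ with $\delta_v(\eta)=\alpha_\omega$: pick a normalized lift $\eta_a$ for each $a$, set $\eta(a|b):=\eta_a(b)$, and take $\eta_0=0$ (consistent since $\alpha_\omega(0|-,-)=0$ by normalization of $\omega$). Next, $\theta(\omega)=0$ means $\theta(\omega,\eta)=\delta_h(\mu)$ for some $\mu\in C^1(A,\Hom(A,N))$. Define
\[
c:=\eta-\mu\in\operatorname{Maps}(A|A,N).
\]
Since $\mu(a|-)$ is linear in the second slot, $\delta_v(\mu)=0$, so $\delta_v(c)=\delta_v(\eta)=\alpha_\omega$. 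On the other hand $\delta_h(c)=\delta_h(\eta)-\delta_h(\mu)=\delta_h(\eta)-\theta(\omega,\eta)=-\beta_\omega$. Comparing with the formulas (ii) and (iii) in the excerpt shows $(\omega,c)\in Z^3_{ab}(A,N)$.

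The only step that is not a direct unwinding of definitions is the assembly of $\eta$ from the pointwise cohomology vanishing, so it is worth noting why this works: the hypothesis is a family of vanishing statements indexed by $a\in A$, and we need a single normalized cochain in $C^1(A,C^1(A,N))$. The freedom to choose each $\eta_a$ independently (with $\eta_0=0$ forced) is what makes this assembly straightforward; independence of $\theta(\omega)$ from this choice was already established earlier, which is what makes the whole argument well-posed. Once $\eta$ is in hand, the rest is bookkeeping with $\delta_h$ and $\delta_v$.
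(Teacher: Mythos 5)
Your proof is correct and follows essentially the same route as the paper: the forward direction observes that an abelian cocycle $(\omega,c)$ lets one take $\eta:=c$ so that $\theta(\omega,c)=\delta_h(c)+\beta_\omega=0$, and the backward direction sets $c:=\eta-\mu$ where $\theta(\omega,\eta)=\delta_h(\mu)$, using $\delta_v(\mu)=0$ (linearity in the second slot) to get $\delta_v(c)=\alpha_\omega$ and $\delta_h(c)=-\beta_\omega$. The only cosmetic difference is that you spell out the assembly of $\eta$ from the pointwise vanishing hypothesis, which the paper leaves implicit.
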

\begin{proof}
If $(\omega,c)\in Z^3_{ab}(A,N)$, then $\tau(\omega)=0$, that implies  
$\theta(\omega)=0$. Now, let $\omega \in Z^3(A,N)$  and $\eta\in C(A|A, N)$ such that 
$\delta_v(\eta)=\alpha_\omega$, that is,  
$$\theta(\omega,\eta)=\delta_h(\eta)+\beta_\omega \in Z^2_{ab}(A,\Hom(A,N)).$$ 
If $[\theta(\omega,\eta)]=0$, there is $l:A\to \Hom(A,N)$ such that $\delta_h(l)=\delta_h(\eta)+\beta_\omega$. Thus $c:=\eta -l$ is such that $(\omega,c)\in Z^3_{ab}(A,N)$, since 

\begin{align*}
   \delta_v(c) =\delta_v(\eta)-\delta_v(l)=\alpha_\omega
\end{align*}
and 
\begin{align*}
    \delta_h(c)&=\delta_h(\eta)-(\delta_h(\eta)+\beta_\omega)\\
    &=-\beta_\omega.
\end{align*}

\end{proof}

\section{Abelian anyons}

In this last section we will present the classification of all possible \textit{prime} or \textit{indecomposable} abelian anyon theories.

\subsection{$S$ and $T$ matrices of abelian theories}

By an abelian theory we will mean a triple $(A,\omega,c)$, where $A$ is abelian group (or equivalently an abelian fusion rules) and $(\omega,c)\in Z^3(A,\Rmod)$ an abelian 3-cocycle (or equivalently a solution of the hexagon equation).

Let $(A,\omega,c)$ be an abelian theory. Recall that the associated quadratic form  $q:A\to \Rmod$ is defined by $q(a)=c(a,a)$. The  \textit{topological spin} of $a\in A$ is defined as the phase $$\theta_a=e^{2\pi i q(a)},$$thus, the topological spin is exactly the associated quadratic form and by the Eilenberg and MacLane theorem  \cite[Theorem 26.1]{eilenberg2} it determines up to gauge equivalence the abelian theory.

Recall that the symmetric bilinear form $$b_q:A\times A\to \Rmod$$ associated to the quadratic for  $q:A\to \Rmod$ is defined by $$b_q(a,b)=q(a+b)-q(a)-q(b).$$ Since $q(a)=c(a,a)$, we also have that $$b_q(a,b)=c(a,b)-c(b,a)$$for all $a,b \in A$. The map $$H^3_{ab}(A,\Rmod)\overset{b}\to \Hom(S^2(A))$$
that associates the symmetric bilinear form $b_q$ to an abelian 3-cocycle is a group homomorphism.
\begin{definition}
An anyonic abelian theory is an abelian theory $(A,\omega,c)$ such that  one of the following equivalent conditions holds:
\begin{enumerate}
\item The $S$-matrix $S_{ab}= |A|^{-1/2}e^{2\pi i b_q(a,b)}$ is non-singular.
\item The symmetric bilinear form $b_q$ is non-degenerated.
\end{enumerate}
\end{definition}
We will say that an abelian theory $(A,\omega,c)$ is \textit{symmetric} if its $b_q$ is trivial, or equivalently, if $-c(a,b)=c(b,a)$ for all $a,b\in A$. We will denote by $H^3_{s}(A,\Rmod)\subset H^3_{ab}(A,\Rmod)$ the subgroup of all equivalence class of symmetric abelian 3-cocycles.

The $T$-matrix of an abelian  anyonic theory is the diagonal matrix of the topological spins, that is, $$T_{ab}=\delta_{a,b}e^{2\pi i q(a)}.$$ Thus, for abelian anyons the $T$-matrix completely determines the theory. On the contrary, the $S$-matrix does not always  determine the theory, however the following result result say that two abelian anyons with the same $S$-matrix  only differ by a symmetric abelian 3-cocycle and their $T$-matrices by a linear character $\chi:A\to \{1,-1\}$.
\begin{proposition}
Let $A$ be an abelian group. Then the diagram
\begin{equation*} 
\begin{tikzcd}
0 \ar{r}& \ar{d}{\operatorname{Tr}} \ar{r}H^3_{s}(A,\Rmod) &\ar{d}{\operatorname{Tr}} H^3_{ab}(A,\Rmod)\ar{r}{b}& \Hom(S^2(A),\Rmod)\ar{d}{=}\ar{r} & 0\\
0\ar{r}& \ar{r}\Hom(A,\frac{1}{2}\mathbb{Z}/\mathbb{Z}) &\ar{r}{b} \cuad(A,\Rmod) &\ar{r} \Hom(S^2(A),\Rmod)\ar{r} & 0
\end{tikzcd}
\end{equation*}commutes, the vertical morphisms are isomorphisms and  the horizontal sequences are exact. 
\end{proposition}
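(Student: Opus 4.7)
The plan is to verify the diagram in three stages: the vertical arrows are isomorphisms, both squares commute, and both rows are short exact. The middle vertical $\operatorname{Tr}$ is already the Eilenberg--MacLane isomorphism cited in \eqref{Traza}, and the right vertical is the identity. For the left vertical, observe that $(\omega,c)\in H^3_{ab}(A,\Rmod)$ lies in $H^3_s(A,\Rmod)$ exactly when $b_q=0$ for $q=\operatorname{Tr}(\omega,c)$; unraveling $b_q(a,b)=q(a+b)-q(a)-q(b)$ together with $q(-a)=q(a)$, this is equivalent to $q$ being a group homomorphism valued in the $2$-torsion of $\Rmod$, i.e.\ an element of $\Hom(A,\frac{1}{2}\mathbb{Z}/\mathbb{Z})$. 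Hence $\operatorname{Tr}$ restricts to a bijection between the two left-hand terms.

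Commutativity of both squares is essentially tautological: for $(\omega,c)\in H^3_{ab}(A,\Rmod)$ the two composites to $\Hom(S^2(A),\Rmod)$ yield the same symmetric bilinear form $b_q(a,b)=c(a,b)-c(b,a)$, and the left square commutes because the horizontal injections are by construction the kernels of the respective $b$-maps, identified under $\operatorname{Tr}$. By the vertical isomorphisms, to prove that both rows are exact it suffices to handle the bottom one, where injectivity of the inclusion and the identification $\ker(b)=\Hom(A,\frac{1}{2}\mathbb{Z}/\mathbb{Z})$ have been addressed in the previous paragraph. The only nontrivial point is surjectivity of $b\colon\cuad(A,\Rmod)\to\Hom(S^2(A),\Rmod)$.

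The main obstacle is therefore this surjectivity, and I would resolve it by a cardinality argument: Corollary \ref{conteo} gives $|\cuad(A,\Rmod)|=|A/2A|\cdot|S^2(A)|$, which coincides with $|\Hom(A,\frac{1}{2}\mathbb{Z}/\mathbb{Z})|\cdot|\Hom(S^2(A),\Rmod)|$, so the partial exact sequence $0\to\Hom(A,\frac{1}{2}\mathbb{Z}/\mathbb{Z})\to\cuad(A,\Rmod)\xrightarrow{b}\Hom(S^2(A),\Rmod)$ forces the image of $b$ to be all of $\Hom(S^2(A),\Rmod)$. A more constructive alternative would be to decompose $A$ into cyclic summands using Proposition \ref{formas cuadra de una suma} and lift any prescribed symmetric bilinear form by assembling quadratic forms on each factor via the explicit generators of Proposition \ref{quad func cyclic}, but the counting argument is the most efficient route for a finite abelian group.
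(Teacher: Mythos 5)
Your proposal is correct and follows essentially the same route as the paper: identify $\ker(b)$ with $\Hom(A,\frac{1}{2}\mathbb{Z}/\mathbb{Z})$ and deduce surjectivity of $b\colon \cuad(A,\Rmod)\to\Hom(S^2(A),\Rmod)$ from the cardinality formula of Corollary \ref{conteo}. You simply spell out the commutativity and the vertical isomorphisms in more detail than the paper does.
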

\begin{proof}
Clearly the kernel of $b: \cuad(A,\Rmod)\to \Hom(S^2(A),\Rmod)$
is $\Hom(A,\frac{1}{2}\mathbb{Z}/\mathbb{Z})$. Thus, by Corollary \ref{conteo} the sequence $$0\to \Hom(A,\frac{1}{2}\mathbb{Z}/\mathbb{Z})\to\cuad(A,\Rmod)\to \Hom(S^2(A),\Rmod)\to 0,$$is exact.
\end{proof}
\begin{remark}
A related result was established  in \cite[Lemma 6.2 (ii), (iii)]{Mason-Ng}.
\end{remark}

\subsection{Prime abelian anyons}

If $(A,\omega,c)$ and $(A',\omega',c')$ are abelian anyons theory, their direct sum is defined as the anyon theory $(A\oplus A',\omega\times \omega',c\times c')$, where $\omega\times \omega'((a,a'),(b,b'),(c,c'))=\omega(a,b,c)\omega'(a',b',c')$ and similarly for $c\times c'$.

We will say that an anyon theory $(A,\omega,c)$ is \textit{prime} if for any non trivial subgroup $B\subset A$, the restriction of the associated bilinear form $b_q$ is degenerated. 

Two abelian theories $(A,\omega,c)$ and $(A,\omega,c)$ are called equivalents if there is a group isomorphism $f:A\to A'$ such that $(f^*(\omega),f^*(c)), (\omega,c)\in Z^3_{ab}(A,\Rmod)$ are cohomologous, or equivalently if $q'(f(a))=q(a)$ for all $a\in A$, where $q$ and $q'$ are the quadratic forms associated.

By  \cite[Theorem 4.4]{Mueguer} any abelian anyon theory is a direct sum of prime abelian anyon theory. Thus, the classification of  abelian anyons is reduced to the classification of prime abelian anyons. 

The Legendre symbol is a function of $a\in \mathbb{Z}^{>0}$ and a prime number $p$ defined as
\[
\left(\frac{a}{p}\right) = 
\begin{cases}
 1 & \text{ if } a \text{ is a quadratic residue modulo } p \text{ and } a \not\equiv 0\pmod{p}, \\
-1 & \text{ if } a \text{ is a quadratic non-residue modulo } p, \\
 0 & \text{ if } a \equiv 0 \pmod{p}.  
\end{cases}\]
Following the notation of \cite{MR839800}, we establish the classification of prime abelian anyons, that follows from results of Wall \cite{MR0156890} and Durfee \cite{MR0480333} about the classification of indecomposable non-degenerated quadratic forms on abelian groups. 

\begin{theorem}\label{clasificacion}
The following is the list of all equivalence  classes of prime abelian anyons theories:
\begin{enumerate}[(i)]
\item If $p\neq 2$ and $\epsilon=\pm 1$, 
$\omega_{p,k}^\epsilon$ denotes the abelian anyon with fusion rules given by $\mathbb{Z}/p^k\mathbb{Z}$ and abelian 3-cocycle 
$(0,c)$, where $c(x,y)=\frac{uxy}{p^k} \Mod{\mathbb{Z}},$  for some  $u\in \mathbb{Z}^{>0}$ with $(p,u)=1$ and $\left(\frac{2u}{p}\right)=\epsilon$.
\item If $\epsilon\in (\mathbb{Z}/8\mathbb{Z})^{\times}$, 
$\omega_{2,k}^\epsilon$ denotes the abelian anyon with fusion rules given by $\mathbb{Z}/2^k\mathbb{Z}$ and abelian 3-cocycle 

\begin{align*} c(x,y)=\frac{uxy}{2^{k+1}}\Mod{\mathbb{Z}}, \  \  \     \  \  \omega(x,y,z)=  \begin{cases} \frac{x}{2} \Mod{\mathbb{Z}}, \qquad &\text{if } y+z\geq 2^{k},\\
0 \Mod{\mathbb{Z}}. \quad  &\text{ otherwise.}
\end{cases}
\end{align*}
for some  $u\in \mathbb{Z}^{>0}$ with $ u \equiv \epsilon \Mod{8}$. The abelian anyons $w_{2,k}^1$ and $w_{2,k}^{-1}$ are defined for all $k\geq 1$ and $w_{2,k}^{5}$ and $w_{2,k}^{-5}$ for all $k\geq 2$.

\item $E_k$ denoted the abelian anyon with fusion rules given by $\mathbb{Z}/2^k\mathbb{Z}\oplus \mathbb{Z}/2^k\mathbb{Z} $ and abelian 3-cocycle $(0,c)$, where $c\in \Hom(\mathbb{Z}/2^k\mathbb{Z}\oplus \mathbb{Z}/2^k\mathbb{Z},\Rmod)$ is defined by

\begin{align*}
c(\vec{e}_i,\vec{e}_j)=\begin{cases} 0\Mod{\mathbb{Z}}, \qquad &\text{if } i=j, \text{or }  i=2, j=1.\\
2^{-k}\Mod{\mathbb{Z}}, \quad  &\text{if } i=1, j=2. 
\end{cases}
\end{align*}
\item $F_k$ denoted the abelian anyon with fusion rules given by $\mathbb{Z}/2^k\mathbb{Z}\oplus \mathbb{Z}/2^k\mathbb{Z} $ and abelian 3-cocycle $(0,c)$, where $c\in \Hom(\mathbb{Z}/2^k\mathbb{Z}\oplus \mathbb{Z}/2^k\mathbb{Z},\Rmod)$ is defined by

\begin{align*}
c(\vec{e}_i,\vec{e}_j)=\begin{cases}\  2^{-k}\Mod{\mathbb{Z}}, \qquad &\text{if } i=j,\\
\ 0\Mod{\mathbb{Z}}, \qquad &\text{if } i=2, j=1,\\
-2^{-k}\Mod{\mathbb{Z}}, \quad  &\text{if } i=1, j=2. 
\end{cases}
\end{align*}
\end{enumerate}
In the cases $E_k$ and $F_k$, we denote $\vec{e_1}=(1,0)$ and $\vec{e_2}=(0,1)$.
\end{theorem}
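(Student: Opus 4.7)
The plan is to translate the classification of prime abelian anyons into the classification of indecomposable non-degenerate quadratic forms on finite abelian groups, and then invoke the results of Wall and Durfee that are already cited in the section. By the Eilenberg--MacLane isomorphism $\operatorname{Tr}: H^3_{ab}(A,\Rmod) \to \cuad(A,\Rmod)$ from \eqref{Traza}, an abelian theory $(A,\omega,c)$ is determined up to gauge equivalence by its topological spin $q$, and the theory is anyonic exactly when the associated symmetric bilinear form $b_q$ is non-degenerate. Two theories are equivalent precisely when the pairs $(A,q)$ and $(A',q')$ are isomorphic. Primeness translates into indecomposability: if $(A,q)$ splits orthogonally as $(B,q|_B) \oplus (B^\perp, q|_{B^\perp})$ with both summands non-degenerate, then $B$ violates primeness; conversely any proper non-trivial subgroup $B$ on which $b_q$ is non-degenerate yields such a splitting via its orthogonal complement. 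Thus Theorem \ref{clasificacion} is equivalent to the statement that every indecomposable non-degenerate quadratic form on a finite abelian group is isomorphic to exactly one entry of the four listed families.

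For the indecomposable classification itself I would proceed in two steps. First, decompose $A = \bigoplus_p A_p$ into $p$-primary components; since $b_q$ is $\mathbb{Z}$-bilinear, $b_q(A_p, A_{p'})=0$ for $p \ne p'$, so $(A,q)$ splits orthogonally along the primary decomposition and an indecomposable form must be supported on a single $A_p$. Second, I invoke the classification of Wall \cite{MR0156890} refined by Durfee \cite{MR0480333}: for odd $p$ every indecomposable form is cyclic, of the shape $q(x)=ux^2/p^k$ on $\mathbb{Z}/p^k\mathbb{Z}$, and two such forms are isomorphic iff the Legendre symbols $(2u/p)$ agree (this follows from the fact that on $(\mathbb{Z}/p^k\mathbb{Z})^\times$ the squares have index two and $2$ detects the coset), which yields (i). For $p=2$, the indecomposables are supported either on a cyclic group $\mathbb{Z}/2^k\mathbb{Z}$ or on the rank-two group $\mathbb{Z}/2^k\mathbb{Z}\oplus\mathbb{Z}/2^k\mathbb{Z}$; the cyclic case is governed by $u \bmod 8$ in $(\mathbb{Z}/8\mathbb{Z})^\times$ and needs the non-trivial $\omega$ from Remark \ref{construccion 3-cociclo de un ciclicoc par} for the $T$-matrix to close up, giving (ii), while the rank-two case produces exactly the hyperbolic type $E_k$ and the twisted type $F_k$ of (iii)--(iv).

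The main obstacle is the careful book-keeping in the $2$-primary case, which I expect to absorb most of the work. Three subtle points have to be settled. First, one must verify that the parameters $\epsilon=\pm 5$ only give new classes when $k\ge 2$: for $k=1$ the value of $q(1)=u/4$ only sees $u \bmod 4$, so $5\equiv 1$ and $-5\equiv 3$ collapse onto $\pm 1$. Second, one must check that $E_k$ and $F_k$ are genuinely indecomposable; the natural candidates for splitting off are the cyclic summands generated by $\vec{e}_1, \vec{e}_2$, but in $E_k$ these have $q(\vec{e}_i)=0$ and hence degenerate restrictions, while in $F_k$ one verifies by direct computation that no order-$2^k$ cyclic subgroup carries a non-degenerate restriction of $b_q$. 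Third, one must rule out indecomposable $2$-adic forms of rank $\ge 3$, which is the structural content of Wall's theorem. Once these three points are in place, the pairwise non-equivalence and the exhaustion of all prime anyons follow from the uniqueness part of Wall--Durfee together with the direct-sum decomposition of M\"uger \cite{Mueguer} cited just above, translated back into anyon language through the dictionary of the preceding subsection.
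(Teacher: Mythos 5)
Your proposal follows exactly the route the paper intends: the paper records no proof of Theorem~\ref{clasificacion} beyond the remark that it ``follows from results of Wall and Durfee'' on indecomposable non-degenerate quadratic forms, combined with M\"uger's prime decomposition and the Eilenberg--MacLane dictionary $\operatorname{Tr}$. Your outline fills in precisely that reduction (primeness $\leftrightarrow$ indecomposability via orthogonal complements, the primary splitting, and the $2$-adic bookkeeping), so it is consistent with, and strictly more detailed than, what the paper itself provides.
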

\begin{proof}
We recall from \cite{MR0156890} the basic structure of a non-degenerate finite qua\-dratic form over an abelian groups $G$. Let $(A,q)$ be a non-degenerate finite quadratic abelian group. The Sylow decomposition $G=\bigoplus_{p}A_p$ is an orthogonal direct sum decomposition with respect to the form $q$. Moreover, by the results of \cite{MR0156890} and \cite{MR0480333} each Sylow subgroup $A_p$, admits an orthogonal direct sum decomposition into indecomposable quadratic group of the following type:

\begin{enumerate}[(i)]
\item[(a)] If $p\neq 2$ and $\epsilon=\pm 1$, 
$\omega_{p,k}^\epsilon$ denotes the quadratic abelian group  $\mathbb{Z}/p^k\mathbb{Z}$ and quadratic form determined by  $q(1)=up^{-k}\Mod{\mathbb{Z}}$ for some  $u\in \mathbb{Z}^{>0}$ with $(p,u)=1$ and $\left(\frac{2u}{p}\right)=\epsilon$.
\item[(b)] If $\epsilon\in (\mathbb{Z}/8\mathbb{Z})^{\times}$, 
$\omega_{2,k}^\epsilon$ denotes the quadratic abelian group $\mathbb{Z}/2^k\mathbb{Z}$ and quadratic form determined by $q(1)=2^{-k-1}u \Mod{\mathbb{Z}}$
for some  $u\in \mathbb{Z}^{>0}$ with $ u \equiv \epsilon \Mod{8}$. The quadratic groups $w_{2,k}^1$ and $w_{2,k}^{-1}$ are defined for all $k\geq 1$ and $w_{2,k}^{5}$ and $w_{2,k}^{-5}$ for all $k\geq 2$.

\item[(c)] $E_k$ denoted the the quadratic abelian group $\mathbb{Z}/2^k\mathbb{Z}\oplus \mathbb{Z}/2^k\mathbb{Z} $  and quadratic form determined by $q((1,0))=q(0,1)=0 \Mod{\mathbb{Z}}$ and $q((1,1))=2^{-k}\Mod{\mathbb{Z}}$.
\item[(d)] $F_k$ denoted the the quadratic abelian group $\mathbb{Z}/2^k\mathbb{Z}\oplus \mathbb{Z}/2^k\mathbb{Z} $ and quadratic form determined by $q((1,0))=q(0,1)=q((1,1))=2^{-k}\Mod{\mathbb{Z}}$.
\end{enumerate}
Thus, to proof the theorem we only need to see that the abelian 3-cocycles defined in (i)-(iv) have the corresponding quadratic forms (a)-(d). In fact,

\begin{align*}
    \text{Case } \omega_{p, k}^{\epsilon}: && c(1,1)=up^{-k}.\\
\text{Case } \omega_{2, k}^{\epsilon}: && c(1,1)=u2^{-k-1}.\\
\text{Case } E_k: && c(\vec{e}_i,\vec{e}_i)=0, \ c((1,1),(1,1))=c(\vec{e}_1,\vec{e}_2)=2^{-k}.\\
\text{Case } E_k: && c(\vec{e}_i,\vec{e}_i)=2^{-k}, \ c((1,1),(1,1))=2c(\vec{e}_1,\vec{e}_1)+c(\vec{e}_1,\vec{e}_2)=2^{-k}.
\end{align*}
\end{proof}

We finish with the formulas of  the modular data of the prime abelian anyons theories (using the convention of identify $U(1)$ with $\mathbb{R}/\mathbb{Z}$):

\begin{enumerate}[(i)]
\item $\omega_{p,k}^\epsilon$  $p\neq 2$, $
\epsilon=\pm 1
$
\begin{align*}
    S_{a,b}= 2uabp^{-k}, && T_{a,a}= ua^2p^{-k}
\end{align*}
\item $\omega_{2,k}^\epsilon$ , $\epsilon \in(\mathbb{Z} / 8 \mathbb{Z})^{ \times}$
\begin{align*}
    S_{a,b}= uab2^{-k}, && T_{a,a}=ua^22^{-(k+1)}
\end{align*}

\item $E_k$ 
\begin{align*}
    S_{(a_1,a_2),(b_1,b_2)}=(a_1b_2+b_1a_2)2^{-k}, && T_{(a,b),(a,b)}=ab2^{-k}
\end{align*}

\item $F_k$ 
\begin{align*}
    S_{(a_1,a_2),(b_1,b_2)}&=(a_1b_1+a_2b_2)2^{-k+1}-(a_1b_2+a_2b_1)2^{-k}\\
    T_{(a,b),(a,b)}&=(a^2+b^2-ab)2^{-k}.
\end{align*}

\end{enumerate}
\providecommand{\bysame}{\leavevmode\hbox to3em{\hrulefill}\thinspace}
\providecommand{\MR}{\relax\ifhmode\unskip\space\fi MR }
\providecommand{\MRhref}[2]{%
  \href{http://www.ams.org/mathscinet-getitem?mr=#1}{#2}
}
\providecommand{\href}[2]{#2}


\end{document}